\documentclass[11pt]{article}

\usepackage{amsthm, amsmath, amssymb, amsfonts, url, booktabs, tikz, setspace, fancyhdr, bm}
\usepackage{geometry}
\usepackage{hyperref, enumerate}
\hypersetup{hidelinks}
\usepackage[shortlabels]{enumitem}
\usepackage[babel]{microtype}
\usepackage[english]{babel}
\usepackage[capitalise]{cleveref}
\usepackage{comment}
\usepackage{bbm}
\usepackage{csquotes}
\usepackage{mathabx}
\usepackage{graphicx}
\usepackage{float}
\usepackage{xcolor}
\usepackage{mathtools}
\usepackage{mathrsfs}
\usepackage{authblk}

\usetikzlibrary{positioning, arrows.meta, shapes.geometric}
\usetikzlibrary{decorations.pathmorphing}

\counterwithin{figure}{section}

\allowdisplaybreaks

\newtheorem{theorem}{Theorem}[section]

\newtheorem{proposition}[theorem]{Proposition}
\newtheorem{lemma}[theorem]{Lemma}

\theoremstyle{definition}

\newtheorem*{defn-non}{Definition}

\newtheorem{remark}[theorem]{Remark}

\newlist{Case}{enumerate}{2}
\setlist[Case,1]{%
	label={\bfseries Case \arabic*.},
	labelindent=1em,
	labelwidth=1.3cm,
	labelsep*=1em,
	leftmargin=!
}
\setlist[Case,2]{%
	label={\bfseries Subcase \arabic{Casei}.\arabic*.},
	labelindent=-1em,
	labelwidth=1.3cm,
	labelsep*=1em,
	leftmargin=!
}

\DeclareMathOperator{\Gen}{Gen}
\DeclareMathOperator{\ord}{ord}

\newcommand{\C}{\mathbb C}

\newcommand{\cB}{\mathcal B}
\newcommand{\cL}{\mathcal L}

\title{When Do Subset Sums in Finite Abelian Groups Support $2$-Designs?}

\author{
	Hengfeng Liu%
	\thanks{\raggedright School of Mathematics, Southwest Jiaotong University, Chengdu, China.
		\mbox{E-mail: hengfengliu@163.com.}},
	\quad
	Chunming Tang%
	\thanks{\raggedright School of Information Science and Technology,
		Southwest Jiaotong University, Chengdu, China. 
		\mbox{E-mail: tangchunmingmath@163.com.}},
	\quad
	Cuiling Fan%
	\thanks{\raggedright School of Mathematics, Southwest Jiaotong University, Chengdu, China.
		\mbox{E-mail: cuilingfan@163.com.}},
	\quad
	Zhengchun Zhou%
	\thanks{\raggedright School of Information Science and Technology,
		Southwest Jiaotong University, Chengdu, China. \mbox{E-mail: zzc@swjtu.edu.cn.}}
}

\date{}

\begin{document}
\maketitle

\begin{abstract}
	Subset sums over finite abelian groups lie at the intersection of
	additive combinatorics, design theory, and coding theory. Let $G$ be a finite abelian group, and let $\cB_k^x$ be the family of $k$-subsets of $G$ whose elements sum to $x\in G$. This paper studies when the incidence structure $(G,\cB_k^x)$ is a block design. The elementary abelian $p$-group case was settled by Falcone and Pavone. Pavone (\emph{Des. Codes Cryptogr.} 91 (2023), 2585--2603) further asked whether, for an arbitrary finite abelian group $G$, the zero-sum incidence structure $(G,\cB_k^0)$ can be a nontrivial $2$-design only when $G$ is an elementary abelian $p$-group. We settle this open question in the stronger form that, for every $x\in G$, $(G,\cB_k^x)$ can be a nontrivial $2$-design only if $G$ is an elementary abelian $p$-group. The proof develops a character-theoretic approach to subset-sum designs, using character sums over the blocks to constrain the structure of the character group $\widehat G$. The approach also yields a complete characterization of subset-sum $1$-designs and general arithmetic restrictions on subset-sum designs over arbitrary finite abelian groups, extending the corresponding results previously known for finite abelian $p$-groups.
\end{abstract}

\section{Introduction}

Let $G$ be an abelian group, written additively, and let $U$ be a finite subset of $G$. Denote by $N(U,k,x)$ the number of $k$-subsets of $U$ that sum to $x\in G$:
\[
N(U,k,x)
=
\#\left\{
S\subseteq U:\ |S|=k,\ \sum_{g\in S}g=x
\right\}.
\]
The classical \emph{subset-sum problem} asks whether $N(U,k,x)>0$. This NP-complete problem \cite{Karp1972} arises naturally in additive combinatorics and has important applications in coding theory and cryptography. Typical examples include knapsack cryptosystems for $G=\mathbb Z$ \cite{MerkleHellman1978}, the deep-hole problem for extended Reed--Solomon codes \cite{Cheng2008,ZhangWanKaipa2020,ZhangWan2023}, and the minimum-distance problem for elliptic curve codes \cite{LiWanZhang2015,ZhangWan2023}. More recently, it has also appeared in computational lattice theory, where it was used to prove that the shortest vector problem in Euclidean space is NP-hard \cite{Wan2026}. The main difficulty of the subset-sum problem lies in the freedom to choose the subset $U$. Nevertheless, considerable progress has been made when $U=G$. Li and Wan \cite{LiWan2008} obtained a closed formula for $N(G,k,x)$ when $G$ is the additive group of a finite field. They subsequently extended the result to arbitrary finite abelian groups \cite{LiWan2012}, and Kosters \cite{Kosters2013} later gave a short proof using character theory.

A $t$-$(n,k,\lambda)$ \emph{block design} is an incidence structure $(\mathcal P,\mathcal B)$ in which $\mathcal P$ is a set of $n$ points and $\mathcal B$ is a family of $k$-subsets of $\mathcal P$, called blocks, such that every $t$-subset of $\mathcal P$ is contained in exactly $\lambda$ blocks. Throughout this paper, we say a $t$-$(n,k,\lambda)$ design is nontrivial if $t<k<n$ and $\lambda>0$.  From the viewpoint of design theory, subset-sum families give natural incidence structures on the underlying group. The zero-sum condition already occurs in standard geometric examples, including the point-flat designs of the affine geometry \(\mathrm{AG}(d,p)\) and the projective geometry \(\mathrm{PG}(d,2)\). Motivated by such examples, Caggegi et al.~\cite{CaggegiFalconePavone2017} introduced additive designs, a notion that has recently attracted considerable attention \cite{BurattiGaliciMontinaroNakicWassermann2026,BurattiMerolaNakic2025}. A block design \((\mathcal P,\mathcal B)\) is called \emph{additive} if the point set \(\mathcal P\) can be embedded in an abelian group \(G\) so that the sum of the points in every block is zero. It is called \emph{strongly additive} if, under such an embedding, the blocks are precisely all zero-sum \(k\)-subsets.
 
Conversely, given a finite abelian group \(G\), one is led to the following general problem: under what conditions is the incidence structure \((G,\cB_k^x)\) a block design?
The problem has been settled for elementary abelian $p$-groups. Falcone and Pavone \cite{FalconePavone2021} showed that the subset-sum $k$-subsets of $G=(\mathbb F_2^n,+)$ form a $2$-design if and only if $k$ is even; in that case they in fact form a $3$-design. For odd $p$, Pavone \cite{Pavone2023} proved that, for $G=(\mathbb F_p^n,+)$, \((G,\cB_k^x)\) is a $1$-design exactly when $p\mid k$, and a $2$-design exactly when $p\mid k$ and $x=0$. The same paper raised the following question in the zero-sum case: for an arbitrary finite abelian group $G$, can $(G,\cB_k^0)$ be a $2$-design only when $G$ is an elementary abelian $p$-group? More recently, the characterization of $1$-designs has been extended to arbitrary finite abelian $p$-groups \cite{LiuTangFanLuo2025}.
 
The problem is also closely connected with coding theory. For an elliptic curve $E$ over $\mathbb F_q$, subset-sum blocks in $E(\mathbb F_q)$ correspond to minimum-weight codewords in the associated elliptic curve codes \cite{Cheng2008,LiWanZhang2015,HanRen2024}. Exploiting this correspondence and choosing elliptic curves with $E(\mathbb F_q)\cong \mathbb Z_p\oplus\mathbb Z_p$, Liu et al.~\cite{LiuTangZhouHanChen2026} obtained the first generic construction of $q$-ary near-MDS codes supporting $2$-designs of length exceeding $q+1$.

In the present paper, we answer Pavone's question \cite{Pavone2023} in the following stronger form, using new character-theoretic techniques.
\begin{theorem}\label{thm:main}
	Let \(G\) be a finite abelian group. For any \(x\in G\) and \(k\) satisfying $2\le k\le |G|-1$, if \((G,\cB_k^x)\) is a \(2\)-\((|G|,k,\lambda)\) design $(\lambda>0)$, then $G$ is an elementary abelian \(p\)-group.
\end{theorem}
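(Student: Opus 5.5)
The plan is to turn the $2$-design condition into an identity between Fourier coefficients on $G\times G$, and then show that this identity forces every nontrivial character of $G$ to have the same prime order $p$. Write $n=|G|$. For distinct $a,b\in G$ let $F(a,b)$ be the number of blocks of $\cB_k^x$ containing $\{a,b\}$. Then
\[
F(a,b)=N\bigl(G\setminus\{a,b\},\,k-2,\,x-a-b\bigr).
\]
Following Kosters' character-theoretic derivation of $N(G,k,x)$, I would write, for each character $\chi\in\widehat G$ of order $d_\chi$,
\[
F(a,b)=\frac1n\sum_{\chi\in\widehat G}\overline{\chi(x-a-b)}\,[y^{k-2}]\,\frac{(1-(-y)^{d_\chi})^{n/d_\chi}}{(1+y\chi(a))(1+y\chi(b))},
\]
using $\prod_{g\in G}(1+y\chi(g))=(1-(-y)^{d_\chi})^{n/d_\chi}$. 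Next I expand
\[
\frac{1}{(1+yu)(1+yv)}=\sum_{j\ge0}(-y)^j\sum_{i=0}^{j}u^iv^{j-i}.
\]
This writes $F(a,b)$ as a finite linear combination of functions $\chi^{\,i-1}(a)\chi^{\,j-i-1}(b)$. The coefficients involve $\overline{\chi(x)}$ and the binomial coefficients $\binom{n/d_\chi}{m}$, which occur only in degrees $md_\chi$.

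The design hypothesis says $F(a,b)=\lambda+g(a)\,\delta_{a=b}$ for some function $g$, where I extend $F$ to the diagonal by the same formula. Taking the Fourier transform on $G\times G$, the diagonal term contributes $\sum_a g(a)\psi_1\psi_2(a)$. Hence the criterion is:
\begin{center}
for $(\psi_1,\psi_2)\ne(1,1)$, the coefficient $\widehat F(\psi_1,\psi_2)$ depends only on the product $\psi_1\psi_2$.
\end{center}
From the expansion above, $\widehat F(\psi_1,\psi_2)$ collects contributions from every $\chi$ and every pair of exponents $(i-1,\,j-i-1)$ with $\chi^{i-1}=\psi_1$ and $\chi^{j-i-1}=\psi_2$. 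Summing over $j\equiv k-2-md_\chi$ yields explicit sums of the form $\sum_m \pm\binom{n/d_\chi}{m}$ over residue classes.

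Now suppose $G$ is not elementary abelian. Then $\widehat G$ contains either a character $\chi$ of order $p^2$, or two characters of distinct prime orders, and hence a character of composite order $d=rs$ with $1<r,s<d$. I would fix such a $\chi$ and compare $\widehat F(\chi^{\alpha},\chi^{\beta})$ with $\widehat F(\chi^{\alpha'},\chi^{\beta'})$, where $\alpha+\beta=\alpha'+\beta'$ but the splits are chosen so that one lands in the cyclic subgroup generated by $\chi^{r}$ and the other does not. Only characters $\chi'$ that generate the same cyclic group $\langle\chi\rangle$, or a supergroup of it, contribute to these coefficients. So the comparison reduces to an identity inside the cyclic group $\langle\chi\rangle$, with the weights $\overline{\chi'(x)}$ entering as a Ramanujan-type sum. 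Using $2\le k\le n-1$, I would show that the resulting difference is a nonzero combination of binomial coefficients, reaching a contradiction. As a sanity check, the same computation at prime orders should recover Pavone's conditions $p\mid k$ and $x=0$ for $p$ odd, and Falcone and Pavone's condition that $k$ be even for $p=2$.

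The main obstacle I expect is cancellation in $\widehat F(\psi_1,\psi_2)$ among the contributions of different $\chi'$ that produce the same pair $(\psi_1,\psi_2)$. These $\chi'$ can have different orders, and their weights $\overline{\chi'(x)}$ are roots of unity. To control this, I would first choose $\psi_1,\psi_2$ of maximal order in a cyclic subgroup, so that only the generators of that subgroup contribute. I would then evaluate the sum over generators as a Gauss/Ramanujan sum $c_d(\cdot)$, and use the fact that a sum of binomial coefficients over residue classes mod $d$ cannot vanish in the relevant range of $k$. If that fails in general, I would fall back to the case split ``exponent $p^2$'' versus ``two primes'' and handle each case with a tailored choice of $(\alpha,\beta)$.
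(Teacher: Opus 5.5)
The Fourier criterion you set up is sound: $(G,\cB_k^x)$ is a $2$-design iff $\widehat F(\psi_1,\psi_2)$ depends only on $\psi_1\psi_2$ for $(\psi_1,\psi_2)\ne(1,1)$. (Since $\overline{\chi(x-a-b)}=\overline{\chi(x)}\chi(a)\chi(b)$, the exponents are $i+1$ and $j-i+1$, not $i-1$ and $j-i-1$.) But your proposal stops where the theorem's content begins. The contradiction is only asserted, and the reduction meant to make it tractable is false.

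\textbf{The reduction to $\langle\chi\rangle$ fails.} The coefficient $\widehat F(\psi_1,\psi_2)$ receives contributions from every $\chi'$ with $\psi_1,\psi_2\in\langle\chi'\rangle$. So a pair lying in $\langle\chi^r\rangle$ collects terms from every cyclic subgroup containing $\chi^r$, not just from $\langle\chi\rangle$ and its supergroups. For example, identify $\widehat G$ with $\mathbb Z_{p^2}\oplus\mathbb Z_p$ and take $\chi=(1,0)$, $\chi'=(1,1)$. Then $p\chi=p\chi'$, yet $\langle\chi'\rangle$ is a different maximal cyclic subgroup. The comparison therefore depends on how many cyclic subgroups sit above each subgroup, i.e.\ on the global structure of $\widehat G$. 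It cannot be settled inside $\langle\chi\rangle$.

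\textbf{Taking $\psi_1$ to generate a maximal cyclic subgroup yields nothing.} With $\widehat F(\psi_1,\psi_2)=\sum_{a,b}F(a,b)\psi_1(a)\psi_2(b)$, every contributing triple $(\chi',i,j)$ has $j+2\equiv k \pmod{\ord(\chi')}$, hence $\chi'^{\,k}=(\psi_1\psi_2)^{-1}$. So once $\exp(G)\mid k$, all coefficients with $\psi_1\psi_2\ne1$ vanish identically. Moreover, a direct count shows $\widehat F(\chi^\alpha,\chi^{-\alpha})$ is the same for every unit $\alpha$ modulo $\ord(\chi)$ when $\langle\chi\rangle$ is maximal. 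After $\exp(G)\mid k$, your criterion is exactly constancy of $\widehat F(\psi,\psi^{-1})$ over $\psi\ne1$. Up to a $\psi$-independent diagonal term, this is the paper's condition that $E_\psi$ be constant. The cross-subgroup cancellation you call the main obstacle is the entire problem.

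\textbf{What the paper supplies that your sketch lacks.}
\begin{itemize}
\item It writes $E_\chi=\beta+\sum_{C\ge\langle\chi\rangle}A_{|C|}R_C(x)$ as an upper zeta transform over the poset of nontrivial cyclic subgroups. M\"obius inversion then gives $\exp(G)\mid k$ and $\kappa\ne0$.
\item Multiplicativity of $R_C$ over coprime orders localizes the system to $S=\widehat G_p$. This needs $R_M(x^{(p)})\ne0$.
\item Counting the $\ell=p^{r-1}$ cyclic subgroups one level above $D\le S^p$ gives $A_{d|D|}R_D(x_p)=(1-\ell)\kappa_p$. Combined with the top-level equations, which force $x_p$ to be annihilated by $S[p^{\alpha-1}]$, this gives $S[p]\subseteq S^p$ and $\dim S[p]\ge2$ whenever $\exp(S)=p^\alpha$ with $\alpha\ge2$.
\item The contradiction is then a magnitude inequality, $|A_{dp}|(p-1)/(\ell-1)>|A_e|\varphi(p^\alpha)$, via Lemma~\ref{lem:binomial-estimate}. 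Mere non-vanishing of a binomial combination cannot suffice, because the Ramanujan weights may vanish (e.g.\ $c_{p^2}(t)=0$ for $p\nmid t$) or have opposite signs.
\item The two-prime case needs a separate sign comparison of $A_{e/p_j}$ and $A_e$ at an odd prime $p_j$, after forcing $\dim\widehat G_{p_j}\ge2$ and $x_{p_j}=0$.
\end{itemize}
Your fallback split into ``exponent $p^2$'' versus ``two primes'' is the right one, but none of the estimates that make either case work is supplied.
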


Since every \(t\)-design with \(t\ge2\) is a \(2\)-design, the theorem also excludes all nontrivial \(t\)-designs ($t\ge2$) arising from subset-sum families over non-elementary finite abelian groups. Combined with the known results on elementary abelian \(p\)-groups \cite{FalconePavone2021,Pavone2023}, this fully characterizes the condition for a finite abelian group to support subset-sum $2$-design.

The techniques developed here also yield a necessary and sufficient criterion for $(G,\cB_k^x)$ to be a $1$-design over an arbitrary finite abelian group $G$. This criterion specializes immediately to the characterization of $1$-designs over finite abelian $p$-groups in \cite[Theorem~3.3]{LiuTangFanLuo2025}. It further gives the arithmetic restrictions $\gcd(k,\exp(G))>1$ for $1$-designs and $\exp(G)\mid k$ for $2$-designs. These results are new for general finite abelian groups and extend those of Pavone \cite{Pavone2023} and Liu et al.~\cite{LiuTangFanLuo2025}.

\section{Preliminaries}\label{sec:preliminaries}
In this section, we recall the basic facts and auxiliary results used in the proof.

\subsection{Characters of finite abelian groups}
Let \(G\) be a finite abelian group of size $n$, written additively, and let \(\widehat G=\operatorname{Hom}(G,\mathbb C^\times)\) be its character group. For a standard reference on characters of finite abelian groups, see \cite{Serre1973}.

One of the most important properties of characters is orthogonality:
\begin{equation}\label{eq:character-orthogonality}
	\sum_{g\in G}\chi(g)=
	\begin{cases}
		n,&\chi=1,\\
		0,&\chi\ne 1,
	\end{cases}
	\qquad
	\sum_{\chi\in\widehat G}\chi(g)=
	\begin{cases}
		n,&g=0,\\
		0,&g\ne 0.
	\end{cases}
\end{equation}

For a subgroup $H\le\widehat G$, define its \emph{annihilator} by $H^\perp
=
\{g\in G:\chi(g)=1\text{ for every }\chi\in H\}$. Then $|H|\,|H^\perp|=n$ and
\begin{equation}\label{eq:subgroup-orthogonality}
	\sum_{\chi\in H}\chi(g)=
	\begin{cases}
		|H|,&g\in H^\perp,\\
		0,&g\notin H^\perp.
	\end{cases}
\end{equation}

For a function $f:G\to\mathbb C$, its \emph{Fourier transform} is given by
$$
\widehat f(\chi)
=
\sum_{g\in G}f(g)\chi(g),
\qquad
\chi\in\widehat G.
$$
The Fourier inversion formula is $f(g)
=
\frac1n
\sum_{\chi\in\widehat G}
\widehat f(\chi)\chi(-g)$. By orthogonality of characters, $f$ is constant if and only if
$\widehat f(\chi)=0$ for every nontrivial $\chi\in\widehat G$.

We shall use the following elementary identity, also noted in \cite{Kosters2013}.

\begin{lemma}
	\label{lem:product-identity}
	Let \(\chi\in\widehat G\) have order \(d\).  Then
	\[
	\prod_{g\in G}\bigl(1-\chi(g)Z\bigr)
	=
	\bigl(1-Z^d\bigr)^{n/d}.
	\]
\end{lemma}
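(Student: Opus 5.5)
The plan is to reduce to the image of \(\chi\). Since \(\chi\) has order \(d\), it is a homomorphism from \(G\) onto the cyclic group \(\mu_d\) of \(d\)-th roots of unity. Its kernel \(K=\ker\chi\) has index \(d\), so \(|K|=n/d\). Each fibre \(\chi^{-1}(\zeta)\) with \(\zeta\in\mu_d\) is a coset of \(K\) and therefore has exactly \(n/d\) elements. Grouping the factors of the product according to the value of \(\chi(g)\) gives
\[
\prod_{g\in G}\bigl(1-\chi(g)Z\bigr)=\prod_{\zeta\in\mu_d}\bigl(1-\zeta Z\bigr)^{n/d}
=\Bigl(\prod_{\zeta\in\mu_d}(1-\zeta Z)\Bigr)^{n/d}.
\]

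It then remains to prove the classical identity \(\prod_{\zeta\in\mu_d}(1-\zeta Z)=1-Z^d\). The polynomial \(W^d-1\) has the \(d\) distinct roots \(\zeta\in\mu_d\), so \(W^d-1=\prod_{\zeta}(W-\zeta)\). Substitute \(W=1/Z\) and multiply by \(Z^d\) to get \(1-Z^d=\prod_\zeta(1-\zeta Z)\). The substitution is legitimate as an identity of polynomials, since both sides are polynomials in \(Z\) that agree for all nonzero \(Z\).

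Combining the two displays gives \((1-Z^d)^{n/d}\), as claimed. No step is a real obstacle. The only point needing care is that \(\chi(G)\) is all of \(\mu_d\) rather than a proper subset. This holds because \(\chi(G)\) is a finite subgroup of \(\mathbb C^\times\), hence cyclic, and its order equals the order of \(\chi\) in \(\widehat G\), which is \(d\). With that, the equal fibre sizes follow from the first isomorphism theorem.
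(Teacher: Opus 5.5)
Your proof is correct and follows the paper's route: the paper also groups the factors by the value of \(\chi(g)\), using that \(\chi\) maps onto the \(d\)-th roots of unity with every value taken exactly \(n/d\) times, and then applies \(\prod_{\zeta^d=1}(1-\zeta Z)=1-Z^d\). You additionally justify why the image is all of the \(d\)-th roots of unity and prove the factorization identity, both of which the paper states without proof.
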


\begin{proof}
	The image of $\chi$ is the group of $d$-th roots of unity, and every value
	has exactly $n/d$ preimages. Hence
	$$
	\prod_{g\in G}\bigl(1-\chi(g)Z\bigr)
	=
	\left(
	\prod_{\zeta^d=1}(1-\zeta Z)
	\right)^{n/d}
	=
	\bigl(1-Z^d\bigr)^{n/d}.
	$$
\end{proof}

We have the following representation of $N(U,k,x)$ by character sums. Throughout this paper, \(\operatorname{Coef}_{k}\left\{\frac{f(Z)}{g(Z)}\right\}\) denotes the $Z^k$-coefficient in the power series expansion $\frac{f(Z)}{g(Z)}$.

\begin{lemma}\label{lem:subset-sum-fourier}
	Let \(U\subseteq G\), \(1\le k\le n\), and \(x\in G\). Then
	\begin{equation}\label{eq:subset-sum-character-formula}
		N(U,k,x)=\frac1n\sum_{\chi\in\widehat G}\chi(-x)
		\operatorname{Coef}_{k}\left\{\prod_{g\in U}(1+\chi(g)Z)\right\}.
	\end{equation}
	In particular,
	\begin{equation}\label{eq:group-subset-sum-character-formula}
		N(G,k,x)=\frac1n\sum_{\chi\in\widehat G}
		\chi(-x)\operatorname{Coef}_{k}\left\{\bigl(1-(-Z)^{\ord(\chi)}\bigr)^{n/\ord(\chi)}\right\}.
	\end{equation}

\end{lemma}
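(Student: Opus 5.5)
We prove \eqref{eq:subset-sum-character-formula} by encoding $N(U,k,x)$ as a coefficient of a generating product, and then filtering the group condition with orthogonality. The first step is the expansion
\[
\prod_{g\in U}\bigl(1+Y_g Z\bigr)=\sum_{S\subseteq U} Z^{|S|}\prod_{g\in S}Y_g ,
\]
where we set $Y_g=\chi(g)$. Multiplicativity of $\chi$ gives $\prod_{g\in S}\chi(g)=\chi\bigl(\sum_{g\in S}g\bigr)$. Hence
\[
\operatorname{Coef}_k\Bigl\{\prod_{g\in U}(1+\chi(g)Z)\Bigr\}=\sum_{S\subseteq U,\ |S|=k}\chi\Bigl(\sum_{g\in S}g\Bigr).
\]

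Next, we multiply by $\chi(-x)$ and sum over $\chi\in\widehat G$. Interchanging the two finite sums gives
\[
\sum_{|S|=k}\ \sum_{\chi\in\widehat G}\chi\Bigl(\sum_{g\in S}g-x\Bigr).
\]
By the second orthogonality relation in \eqref{eq:character-orthogonality}, the inner sum equals $n$ when $\sum_{g\in S}g=x$ and $0$ otherwise. The total is therefore $n\,N(U,k,x)$. Dividing by $n$ yields \eqref{eq:subset-sum-character-formula}.

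For \eqref{eq:group-subset-sum-character-formula}, take $U=G$ and apply Lemma~\ref{lem:product-identity} with $Z$ replaced by $-Z$. This is legitimate because the lemma is a polynomial identity in $Z$. It gives
\[
\prod_{g\in G}(1+\chi(g)Z)=\bigl(1-(-Z)^{d}\bigr)^{n/d}, \qquad d=\ord(\chi).
\]
Substituting this into \eqref{eq:subset-sum-character-formula} finishes the proof.

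There is no real obstacle here. The only points needing care are the sign substitution $Z\mapsto -Z$ and the use of the correct orthogonality relation, namely the sum over characters rather than the sum over group elements.
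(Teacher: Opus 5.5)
Your proposal is correct and follows essentially the same route as the paper: expand $\prod_{g\in U}(1+\chi(g)Z)$ into a sum over subsets, filter the condition $\sum_{g\in S}g=x$ with the orthogonality relation summed over $\widehat G$, and then substitute $Z\mapsto -Z$ in Lemma~\ref{lem:product-identity} for the case $U=G$. You also supply details the paper leaves implicit, namely the interchange of the finite sums and the check that the substitution $Z\mapsto -Z$ is valid.
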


\begin{proof}
	
	Expanding the product gives
	\begin{equation}\label{eq:subset-product-identity}
		\prod_{g\in U}(1+\chi(g)Z)
		=\sum_{S\subseteq U}\chi\left(\sum_{g\in S}g\right)Z^{|S|}.
	\end{equation}
	
	Using character orthogonality and Eq.~\eqref{eq:subset-product-identity}, we obtain Eq.~\eqref{eq:subset-sum-character-formula}. Equation~\eqref{eq:group-subset-sum-character-formula} then follows from Lemma~\ref{lem:product-identity}.
\end{proof}

\subsection{Primary decomposition and \texorpdfstring{$p$}{p}-torsion}

We shall use the canonical \emph{primary decomposition} of finite
abelian groups. Let $A$ be a finite abelian group, written
multiplicatively. For each prime $p\mid |A|$, define
\[
A_p
=
\left\{
a\in A:
a^{p^r}=1
\text{ for some } r\ge 0
\right\}.
\]
Then $A_p$ is the unique Sylow $p$-subgroup of $A$, also called the
\emph{$p$-primary component} of $A$. The primary decomposition of
$A$ is the internal direct product
\[
A=\prod_{p\mid |A|} A_p.
\]
In particular, every $a\in A$ admits a unique factorization $a=\prod_{p\mid |A|}a_p$, where $a_p\in A_p$.

For each prime $p\mid |A|$, there exist a unique positive integer $d_p$ and uniquely determined positive integers
$\lambda_{p,1}\ge\lambda_{p,2}\ge\cdots\ge\lambda_{p,d_p}\ge1$ such that
\[
A_p
\cong
\mathbb Z_{p^{\lambda_{p,1}}}
\oplus \mathbb Z_{p^{\lambda_{p,2}}}
\oplus\cdots\oplus
\mathbb Z_{p^{\lambda_{p,d_p}}}.
\]
Here $\mathbb Z_m$ denotes the cyclic group of order $m$.

For an integer $s\ge1$, the \emph{$p^s$-torsion} subgroup of $A$ is
\[
A[p^s]=\{a\in A:a^{p^s}=1\}.
\]
Since an element of a $q$-primary component with $q\ne p$ cannot have nontrivial $p$-power order, one has $A[p^s]=A_p[p^s]$. The cyclic decomposition of $A_p$ further yields
\[
A[p^s]
\cong
\bigoplus_{i=1}^{d_p}
\mathbb Z_{p^{\min\{s,\lambda_{p,i}\}}}.
\]
In particular, $A[p]\cong \mathbb Z_p^{\,d_p}$ is an $\mathbb F_p$-vector space, and $d_p$ is called the \emph{$p$-rank} of $A$.

\subsection{Ramanujan sums over cyclic subgroups}
We briefly recall the Ramanujan sums used below. Their use in subset-sum enumeration over cyclic groups goes back to \cite{Ramanathan1944}. If \(d\ge1\), \(\zeta_d\) is a primitive \(d\)-th root of unity, and \(r\in\mathbb Z\), the classical Ramanujan sum is
\[
c_d(r)=\sum_{a\in(\mathbb Z/d\mathbb Z)^\times}\zeta_d^{ar}.
\]
For a cyclic subgroup $C\le\widehat G$ and $x\in G$, let
\[
\Gen(C)=\{\eta\in C:\langle\eta\rangle=C\},
\qquad
R_C(x):=\sum_{\eta\in\Gen(C)}\eta(-x).
\]
Thus $R_C(x)$ is the corresponding Ramanujan sum over the generators of $C$.

We record several elementary properties of $R_C(x)$; each follows directly from the definition.

\begin{remark}\label{rem:generator-sum-properties}
	The following properties hold.
	\begin{enumerate}[(i)]
		\item If $C$ has prime order $p$, then
		$$
		R_C(x)
		=
		\begin{cases}
			p-1,&x\in C^\perp,\\
			-1,&x\notin C^\perp.
		\end{cases}
		$$
		
		\item If $C$ has order $p^a$, then
		$$
		|R_C(x)|
		\le
		\varphi(p^a)
		=
		p^{a-1}(p-1).
		$$
		
		\item If $C_1,C_2\le\widehat G$ are cyclic subgroups of coprime
		orders, then $C_1C_2=C_1\times C_2$ is cyclic and
		$$
		R_{C_1C_2}(x)
		=
		R_{C_1}(x)R_{C_2}(x).
		$$
	\end{enumerate}
\end{remark}

\subsection{Zeta transforms on finite posets}

We briefly recall the zeta transform and M\"obius inversion on finite
posets; for details, see \cite[Section~3.7]{Stanley2012}.

Let $\mathcal P$ be a finite poset and let $w:\mathcal P\to\mathbb C$.
The \emph{upper zeta transform} of $w$ is the function $F:\mathcal P\to\mathbb C$
defined by
$$
F(a)
=
\sum_{b\ge a}w(b),
\qquad
a\in\mathcal P.
$$
If $\mu_{\mathcal P}$ denotes the M\"obius function of $\mathcal P$, then
M\"obius inversion gives
$$
w(a)
=
\sum_{b\ge a}\mu_{\mathcal P}(a,b)F(b).
$$
Here $w$ is uniquely determined by its upper zeta transform $F$. We shall
use the following two consequences.

\begin{lemma}\label{lem:zeta-inversion}
	Suppose that
	$$
	F(a)=\sum_{b\ge a}w(b)
	$$
	for every $a\in\mathcal P$. If $F\equiv0$, then $w\equiv0$.
\end{lemma}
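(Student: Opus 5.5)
The plan is to deduce the lemma directly from M\"obius inversion, as recalled just above the statement. Since $\mathcal P$ is finite, M\"obius inversion applies to the upper zeta transform: whenever $F(a)=\sum_{b\ge a}w(b)$ for all $a\in\mathcal P$, we have
$$
w(a)=\sum_{b\ge a}\mu_{\mathcal P}(a,b)F(b)
$$
for every $a\in\mathcal P$. Substituting $F\equiv0$ makes each summand on the right vanish, so $w(a)=0$ for all $a$, i.e.\ $w\equiv0$.

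To avoid relying on the M\"obius function, I would instead give a short self-contained induction that uses only finiteness of $\mathcal P$. Suppose $w\not\equiv0$. The support $\{a\in\mathcal P: w(a)\ne0\}$ is then a nonempty finite subposet, so it has a maximal element $a_0$. Every $b>a_0$ lies outside the support, hence $w(b)=0$, and therefore
$$
0=F(a_0)=w(a_0)+\sum_{b>a_0}w(b)=w(a_0),
$$
which contradicts $w(a_0)\ne0$. Equivalently, one can run a downward induction over a linear extension of $\mathcal P$.

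Neither route has a real obstacle. The one point to check is that the partial order is well-founded upward, which holds because $\mathcal P$ is finite, so maximal elements exist.
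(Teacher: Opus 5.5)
Your first argument is exactly the paper's proof: apply M\"obius inversion $w(a)=\sum_{b\ge a}\mu_{\mathcal P}(a,b)F(b)$ and substitute $F\equiv0$. Your alternative argument, which takes a maximal element of the support of $w$, is also correct and has the small advantage of being self-contained, since it needs only finiteness of $\mathcal P$ rather than the M\"obius function.
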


\begin{proof}
	By M\"obius inversion,
	$$
	w(a)
	=
	\sum_{b\ge a}\mu_{\mathcal P}(a,b)F(b)
	=
	0
	$$
	for every $a\in\mathcal P$.
\end{proof}

\begin{lemma}\label{lem:one-step-zeta-identity}
	Suppose that
	$$
	F(a)=\sum_{b\ge a}w(b)=\kappa
	$$
	for every $a\in\mathcal P$. Fix $a\in\mathcal P$, and let
	$$
	\mathcal R(a)
	\subseteq
	\{r\in\mathcal P:a<r\}
	$$
	be such that every $b>a$ lies above a unique element of
	$\mathcal R(a)$. Then
	$$
	w(a)
	=
	\bigl(1-|\mathcal R(a)|\bigr)\kappa.
	$$
\end{lemma}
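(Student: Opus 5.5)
The plan is to express $w(a)$ through $F(a)$ and the values $F(r)$ for $r\in\mathcal R(a)$, and then use the fact that $F$ is identically $\kappa$. First I write $F(a)=w(a)+\sum_{b>a}w(b)$, which separates off the term $b=a$. Next I split the strict upper set $\{b\in\mathcal P:b>a\}$ according to the elements of $\mathcal R(a)$. By hypothesis, every $b>a$ lies above exactly one $r\in\mathcal R(a)$, so the sets $\{b:b\ge r\}$ for $r\in\mathcal R(a)$ are pairwise disjoint. Since each $r\in\mathcal R(a)$ satisfies $r>a$, transitivity gives $\{b:b\ge r\}\subseteq\{b:b>a\}$, so their union is exactly the strict upper set of $a$. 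Hence
$$
\sum_{b>a}w(b)=\sum_{r\in\mathcal R(a)}\sum_{b\ge r}w(b)=\sum_{r\in\mathcal R(a)}F(r).
$$

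Substituting $F\equiv\kappa$ then gives $\kappa=w(a)+|\mathcal R(a)|\,\kappa$, that is, $w(a)=(1-|\mathcal R(a)|)\kappa$. This is the claimed identity.

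The only step needing care is the partition claim. "Lies above" must be read as $b\ge r$, so that each $r\in\mathcal R(a)$ is counted inside its own block; with that reading, disjointness and covering are exactly the hypothesis. The whole argument is a finite rearrangement of sums, and Möbius inversion (Lemma~\ref{lem:zeta-inversion}) is not needed.
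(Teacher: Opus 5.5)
Your proposal is correct and matches the paper's proof: both separate $w(a)$ from $F(a)=\kappa$ and use the hypothesis on $\mathcal R(a)$ to write $\sum_{b>a}w(b)=\sum_{r\in\mathcal R(a)}F(r)=|\mathcal R(a)|\kappa$. Your explicit check that the up-sets $\{b:b\ge r\}$, $r\in\mathcal R(a)$, are disjoint, lie inside the strict upper set of $a$, and cover it is the same fact the paper states as ``each $b>a$ occurs exactly once in the double sum.''
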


\begin{proof}
	By the defining property of $\mathcal R(a)$, each $b>a$ occurs exactly
	once in the following double sum. Hence
	$$
	|\mathcal R(a)|\kappa
	=
	\sum_{r\in\mathcal R(a)}
	\sum_{b\ge r}w(b)
	=
	\sum_{b>a}w(b).
	$$
	Since $\kappa
	=
	w(a)+\sum_{b>a}w(b)$, it follows that
	$$
	w(a)
	=
	\bigl(1-|\mathcal R(a)|\bigr)\kappa.
	$$
\end{proof}

\section{Characterizations of subset-sum designs via character sums}
\label{sec:characters-subset-sums}

In this section, we develop a character-theoretic approach to subset-sum designs. We derive character-sum characterizations for $1$-designs and $2$-designs and obtain the necessary conditions $\gcd(k,\exp(G))>1$ for a $1$-design and $\exp(G)\mid k$ for a $2$-design. 

\subsection{Characterization of subset-sum \texorpdfstring{$1$}{1}-designs}
In the incidence structure \((G,\cB_k^x)\), the \emph{replication number}
of \(g\in G\) is defined as
$$r_{k}^{x}(g):=\#\{B\in\cB_k^x:\ g\in B\}.$$

\begin{lemma}\label{lem:punctured-subset-count}
	For every \(g\in G\), one has $r_k^x(g)=N(G\setminus\{0\},k-1,x-kg)$.
\end{lemma}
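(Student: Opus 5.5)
We will construct an explicit bijection between the blocks through $g$ and the $(k-1)$-subsets of $G\setminus\{0\}$ summing to $x-kg$. The idea is to translate every block by $-g$, so that the distinguished point $g$ moves to $0$, and then delete $0$.

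Fix $g\in G$ and let $B\in\cB_k^x$ with $g\in B$.

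\textbf{Forward map.} Consider the translate $B-g=\{b-g:b\in B\}$.
\begin{itemize}
\item It is again a $k$-subset of $G$, since translation is a bijection of $G$.
\item It contains $0$, because $g\in B$.
\item Its elements sum to $\sum_{b\in B}b-kg=x-kg$.
\end{itemize}
Now remove the element $0$ and set
\[
S:=(B-g)\setminus\{0\}.
\]
Then $S$ is a $(k-1)$-subset of $G\setminus\{0\}$. Removing $0$ does not change the sum, so $S$ still sums to $x-kg$. Hence $\Phi(B):=S$ maps the blocks through $g$ into the sets counted by $N(G\setminus\{0\},k-1,x-kg)$.

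\textbf{Inverse map.} Conversely, let $S$ be a $(k-1)$-subset of $G\setminus\{0\}$ with sum $x-kg$. Set
\[
B:=(S\cup\{0\})+g.
\]
\begin{itemize}
\item Since $0\notin S$, the set $S\cup\{0\}$ has exactly $k$ elements, so $B$ is a $k$-subset.
\item $B$ contains $g$.
\item Its sum is $(x-kg)+0+kg=x$, so $B\in\cB_k^x$.
\end{itemize}

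\textbf{Conclusion.} The two maps are clearly mutually inverse: translating by $\pm g$ undoes itself, and adjoining or removing $0$ undoes itself because $0\in B-g$ and $0\notin S$. Therefore $\Phi$ is a bijection, and $r_k^x(g)=N(G\setminus\{0\},k-1,x-kg)$.

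The only point needing care is the cardinality bookkeeping. It rests on $0\notin S$ in one direction and $0\in B-g$ in the other, which guarantees that the sizes $k$ and $k-1$ match. The degenerate case $k=1$ is also consistent: both sides count the empty set exactly when $x=g$.
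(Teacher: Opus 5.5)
Your proposal is correct and follows the same approach as the paper: translate each block through $g$ by $-g$ so that $g$ goes to $0$, then delete $0$ to get a $(k-1)$-subset of $G\setminus\{0\}$ with sum $x-kg$. Your explicit inverse map $S\mapsto (S\cup\{0\})+g$ makes the bijectivity that the paper leaves implicit fully rigorous.
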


\begin{proof}
	Translation by \(-g\) sends a block \(B\in\cB_k^x\) containing \(g\)
	to a \(k\)-subset \(B-g\) containing \(0\), whose sum is \(x-kg\).  Removing
	this \(0\) gives a \((k-1)\)-subset of \(G\setminus\{0\}\) with sum \(x-kg\).
\end{proof}

The replication number can be viewed as a function $r_k^x(\cdot)$ on $G$. Using Fourier analysis on $r_k^x(\cdot)$, we first give a character-theoretic characterization of when $(G,\cB_k^x)$ is a $1$-design.

\begin{theorem}\label{thm:one-design-criterion}
	Let \(2\le k\le n-1\) and \(x\in G\).  For each
	\(\psi\in\widehat G\), put
	\[
	\mathcal R_k(\psi)
	:=
	\{\chi\in\widehat G:\chi^k=\psi\}.
	\]
	Then \((G,\cB_k^x)\) is a \(1\)-design if and only if, for every
	nontrivial character \(\psi\in\widehat G\),
	\begin{equation}\label{eq:one-design-criterion}
		\mathcal R_k(\psi)=\emptyset
		\quad\text{or}\quad
		\sum_{\chi\in\mathcal R_k(\psi)}
		\chi(-x)
		\operatorname{Coef}_{k-1}\left\{
		\frac{(1-(-Z)^{\ord(\chi)})^{n/\ord(\chi)}}{1+Z}\right\}
		=0.
	\end{equation}
\end{theorem}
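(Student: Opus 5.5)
The plan is to compute the Fourier transform of the replication function $r_k^x$ and use that $(G,\cB_k^x)$ is a $1$-design exactly when $r_k^x$ is constant on $G$. By the remark in the preliminaries, this happens if and only if $\widehat{r_k^x}(\psi)=0$ for every nontrivial $\psi\in\widehat G$. So the whole task is to get a closed form for $\widehat{r_k^x}(\psi)$ and to see that it is, up to a nonzero factor, the sum in \eqref{eq:one-design-criterion}.

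First I will rewrite $r_k^x(g)$ as a character sum. By Lemma~\ref{lem:punctured-subset-count} and Eq.~\eqref{eq:subset-sum-character-formula} with $U=G\setminus\{0\}$,
\[
r_k^x(g)=\frac1n\sum_{\chi\in\widehat G}\chi(kg-x)\operatorname{Coef}_{k-1}\Bigl\{\prod_{h\ne0}(1+\chi(h)Z)\Bigr\}.
\]
Since $\chi(0)=1$, Lemma~\ref{lem:product-identity} (with $Z$ replaced by $-Z$) gives
\[
\prod_{h\ne 0}(1+\chi(h)Z)=\frac{(1-(-Z)^{\ord(\chi)})^{n/\ord(\chi)}}{1+Z}.
\]
Write $c(\chi)$ for the $Z^{k-1}$-coefficient of this quotient. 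We also have $\chi(kg)=\chi^k(g)$. Hence
\[
r_k^x(g)=\frac1n\sum_{\chi}\chi(-x)\,c(\chi)\,\chi^k(g).
\]

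Next I group the terms according to the value $\psi=\chi^k$:
\[
r_k^x(g)=\frac1n\sum_{\psi\in\widehat G}\psi(g)\,S(\psi),\qquad
S(\psi)=\sum_{\chi\in\mathcal R_k(\psi)}\chi(-x)c(\chi),
\]
with $S(\psi)=0$ when $\mathcal R_k(\psi)=\emptyset$. This expresses $r_k^x$ as a linear combination of characters. Characters are linearly independent (equivalently, apply orthogonality \eqref{eq:character-orthogonality} and Fourier inversion). So the Fourier transform satisfies $\widehat{r_k^x}(\psi^{-1})=S(\psi)$ for every $\psi$; the inversion $\psi\mapsto\psi^{-1}$ is harmless because it permutes the nontrivial characters. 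Therefore $r_k^x$ is constant if and only if $S(\psi)=0$ for all nontrivial $\psi$, which is exactly condition \eqref{eq:one-design-criterion}.

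The main point needing care is the product identity with $0$ removed. The division by $(1+Z)$ is a genuine power-series division, and it is legitimate because the factor for $h=0$ is exactly $1+Z$ and no conversion step is needed. Beyond that, the only bookkeeping is the conjugation sign in matching $\psi$ with the Fourier coefficient. No condition on $k$ beyond $2\le k\le n-1$ is used. This range only ensures $k-1\ge1$ and keeps the setting nontrivial.
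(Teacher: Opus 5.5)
Your proposal is correct and follows essentially the same route as the paper: rewrite $r_k^x$ via the punctured subset count, apply the product identity with the factor $1+Z$ for $0$ removed, and identify $\widehat{r_k^x}(\psi^{-1})$ with the fibre sum over $\mathcal R_k(\psi)$. The only cosmetic difference is that you group the terms by $\psi=\chi^k$ first and appeal to linear independence of characters, whereas the paper computes $\widehat{r_k^x}(\psi^{-1})$ directly by orthogonality.
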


\begin{proof}
	For \(y\in G\), character orthogonality gives
	\[
	N(G\setminus\{0\},k-1,y)
	=
	\frac1n
	\sum_{\chi\in\widehat G}
	\chi(-y)
	\operatorname{Coef}_{k-1}\left\{
	\prod_{a\in G\setminus\{0\}}(1+\chi(a)Z)\right\}.
	\]
	If \(\ord(\chi)=d\), then Lemma~\ref{lem:product-identity}, applied with
	\(Z\) replaced by \(-Z\), gives
	\[
	\prod_{a\in G\setminus\{0\}}(1+\chi(a)Z)
	=
	\frac{(1-(-Z)^d)^{n/d}}{1+Z}.
	\]
	Hence, by Lemma~\ref{lem:punctured-subset-count},
	\[
	r_k^x(g)
	=
	\frac1n
	\sum_{\chi\in\widehat G}
	\chi(-x)\chi^k(g)
	\operatorname{Coef}_{k-1}\left\{
	\frac{(1-(-Z)^{\ord(\chi)})^{n/\ord(\chi)}}{1+Z}\right\}.
	\]
	Thus, for each \(\psi\in\widehat G\), the Fourier coefficient of
	\(r_k^x\) at \(\psi^{-1}\) is
	\[
	\begin{aligned}
		\widehat r_k^x(\psi^{-1})
		&=
		\sum_{g\in G}r_k^x(g)\psi^{-1}(g) \\
		&=
		\frac1n
		\sum_{\chi\in\widehat G}
		\chi(-x)
		\operatorname{Coef}_{k-1}\left\{
		\frac{(1-(-Z)^{\ord(\chi)})^{n/\ord(\chi)}}{1+Z}\right\}
		\sum_{g\in G}(\chi^k\psi^{-1})(g) \\
		&=
		\begin{cases}
			0,
			& \mathcal R_k(\psi)=\emptyset, \\[4pt]
			\displaystyle
			\sum_{\chi\in\mathcal R_k(\psi)}
			\chi(-x)
			\operatorname{Coef}_{k-1}\left\{
			\frac{(1-(-Z)^{\ord(\chi)})^{n/\ord(\chi)}}{1+Z}\right\},
			& \mathcal R_k(\psi)\ne\emptyset.
		\end{cases}
	\end{aligned}
	\]
	Since the replication number \(r_k^x\) is constant if and only if
	\(\widehat r_k^x(\psi^{-1})=0\) for every nontrivial
	\(\psi\in\widehat G\), the assertion now follows.
\end{proof}
\begin{proposition}\label{prop:coprime-k-exponent}
	Let \(2\le k\le n-1\) and \(x\in G\). If \((G,\cB_k^x)\) is a \(1\)-design, then
	\(\gcd(k,\exp(G))>1\).
\end{proposition}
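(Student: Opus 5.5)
Assume $\gcd(k,\exp(G))=1$; we aim for a contradiction with Theorem~\ref{thm:one-design-criterion}. Since the exponent of $\widehat G$ equals $\exp(G)$, the map $\chi\mapsto\chi^k$ is an automorphism of $\widehat G$. Thus for every $\psi$ the set $\mathcal R_k(\psi)$ consists of exactly one character $\chi$, and $\ord(\chi)=\ord(\psi)$. The criterion then says that for every nontrivial $\chi\in\widehat G$, with $d=\ord(\chi)$,
\[
\chi(-x)\operatorname{Coef}_{k-1}\left\{\frac{(1-(-Z)^{d})^{n/d}}{1+Z}\right\}=0 .
\]
Since $\chi(-x)$ is a root of unity, this forces the coefficient $c_d:=\operatorname{Coef}_{k-1}\{(1-(-Z)^d)^{n/d}/(1+Z)\}$ to vanish for every divisor $d>1$ of $n$ that occurs as a character order. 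It therefore suffices to find one nontrivial $\chi$ with $c_{\ord(\chi)}\neq0$.

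The key step is to compute $c_d$. The product $\prod_{a\ne0}(1+\chi(a)Z)$ is a polynomial, and $1+Z$ divides $1-(-Z)^d$. Writing $u=-Z$, we have
\[
\frac{(1-u^d)^{n/d}}{1-u}=(1+u+\dots+u^{d-1})(1-u^d)^{n/d-1}.
\]
Put $k-1=qd+r$ with $0\le r\le d-1$. The coefficient of $u^{k-1}$ comes only from the term $u^r$ of the first factor paired with $u^{qd}$ of the second, so
\[
c_d=(-1)^{k-1}(-1)^q\binom{n/d-1}{q}.
\]
This is nonzero exactly when $q\le n/d-1$, that is, when $qd\le n-d$.

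It remains to choose $d$ so that $\lfloor (k-1)/d\rfloor\, d\le n-d$. Since $d\mid n$ and $qd\le k-1\le n-2$, the multiple $qd$ of $d$ is strictly less than $n$, hence $qd\le n-d$ automatically. So $c_d\ne0$ for every divisor $d$ of $n$, including any $d=\ord(\chi)>1$. Because $n\ge k+1\ge3$, the group $\widehat G$ contains a nontrivial character, and the vanishing condition fails. This contradiction proves $\gcd(k,\exp(G))>1$.

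The main obstacle is the coefficient extraction: keeping the sign bookkeeping honest and checking that the $u^r$ term is the only contribution. This works because $0\le r<d$ and the second factor involves only powers $u^{jd}$. I would also double-check that the bijectivity of $\chi\mapsto\chi^k$ uses exactly $\gcd(k,\exp(\widehat G))=1$, and that $\exp(\widehat G)=\exp(G)$, which holds since $\widehat G\cong G$.
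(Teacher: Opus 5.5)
Your proof is correct and follows the paper's argument. Coprimality makes $\chi\mapsto\chi^k$ an automorphism, so each $\mathcal R_k(\psi)$ is a singleton, and the criterion then forces $(-1)^{k-1+q}\binom{n/d-1}{q}$ to vanish, which it does not. The only differences are minor: you extract the coefficient by factoring $(1-u^d)/(1-u)$ as a geometric sum instead of summing the alternating binomial series, and you check explicitly that $q\le n/d-1$, which the paper asserts without comment.
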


\begin{proof}
	Put \(e=\exp(G)\), and suppose that \((k,e)=1\).  Then the map \(\chi\mapsto\chi^k\) is an automorphism of \(\widehat G\). For a nontrivial \(\psi\in\widehat G\), let \(\chi\) be the unique character with \(\chi^k=\psi\).  Then Eq.~\eqref{eq:one-design-criterion} gives
	\begin{equation}\label{eq:one-design-coefficient-zero}
		\chi(-x)
		\operatorname{Coef}_{k-1}\left\{
		\frac{(1-(-Z)^{\ord(\chi)})^{n/\ord(\chi)}}{1+Z}\right\}
		=0.
	\end{equation}
	For \(d=\ord(\chi)\), expanding \(1/(1+Z)\) as a formal power series gives
	\begin{equation}\label{eq:punctured-coefficient}
		\operatorname{Coef}_{k-1}\left\{\frac{(1-(-Z)^d)^{n/d}}{1+Z}\right\}
		=
		(-1)^{k-1}\sum_{j=0}^{\lfloor \frac{k-1}{d}\rfloor}(-1)^j\binom {n/d}{j}
		=
		(-1)^{k-1+\lfloor \frac{k-1}{d}\rfloor}\binom{n/d-1}{\lfloor \frac{k-1}{d}\rfloor}.
	\end{equation}
	Hence this coefficient is nonzero, contradicting Eq.~\eqref{eq:one-design-coefficient-zero}.
\end{proof}

Proposition~\ref{prop:coprime-k-exponent} extends to arbitrary finite abelian groups the necessary divisibility condition $p\mid k$ previously obtained for elementary abelian $2$-groups in the zero-sum case \cite[Remark~2.7(1)]{FalconePavone2021}, for elementary abelian $p$-groups with $p$ odd \cite[Theorem~3.1]{Pavone2023}, and, more generally, for finite abelian $p$-groups \cite[Theorem~3.3 and Proposition~3.4]{LiuTangFanLuo2025}.

\begin{remark}\label{rem:p-group-one-design}
	When \(G\) is a finite abelian \(p\)-group, Theorem~\ref{thm:one-design-criterion} has a simple form. Suppose \(\exp(G)=p^\alpha\), and write \(k=p^h u\) with \((u,p)=1\). Then \((G,\cB_k^x)\) is a \(1\)-design if and only if either $h\ge\alpha$ or $1\le h<\alpha$ and $x\notin p^hG$. This recovers \cite[Theorem~3.3]{LiuTangFanLuo2025}.

	Indeed, if \(h\ge\alpha\), then \(\chi^k=1\) for every
	\(\chi\in\widehat G\), so the sums in Eq.~\eqref{eq:one-design-criterion}
	are empty for all nontrivial \(\psi\). Now assume \(h<\alpha\). Since \(\chi\mapsto\chi^u\) is an automorphism of \(\widehat G\), the fibres of \(\chi\mapsto\chi^k\) are, up to this automorphism, the same as those of \(\chi\mapsto\chi^{p^h}\). Every nontrivial fibre is therefore a coset \(\chi_0\widehat G[p^h]\), where
	\(\widehat G[p^h]=\{\tau\in\widehat G:\tau^{p^h}=1\}\). Then \(\ord(\chi_0)>p^h\), and therefore
	\(\ord(\chi_0\tau)=\ord(\chi_0)\) for every
	\(\tau\in\widehat G[p^h]\). The corresponding sum in
	Eq.~\eqref{eq:one-design-criterion} is thus
	\begin{equation}\label{eq:p-group-fibre-sum}
		\chi_0(-x)
		\operatorname{Coef}_{k-1}\left\{
		\frac{(1-(-Z)^{\ord(\chi_0)})^{n/\ord(\chi_0)}}{1+Z}\right\}
		\sum_{\tau\in\widehat G[p^h]}\tau(-x).
	\end{equation}
	By Eq.~\eqref{eq:punctured-coefficient}, the coefficient preceding the character sum is
	nonzero. Moreover, character orthogonality gives
	$$
	\sum_{\tau\in\widehat G[p^h]}\tau(-x)
	=
	\begin{cases}
		|\widehat G[p^h]|, & x\in p^hG,\\
		0, & x\notin p^hG.
	\end{cases}
	$$
	Hence Eq.~\eqref{eq:p-group-fibre-sum} vanishes if and only if \(x\notin p^hG\).
\end{remark}

\subsection{Characterization of subset-sum \texorpdfstring{$2$}{2}-designs}

We begin with the standard incidence-matrix formulation of a
$2$-design; see, for example,
\cite[Chapter~I]{BethJungnickelLenz1999}.
Let $(\mathcal P,\mathcal B)$ be a $2$-$(n,k,\lambda)$ design
with replication number $r$, and let $A$ be its point--block
incidence matrix, with rows indexed by $\mathcal P$ and columns
indexed by $\mathcal B$. The matrix $AA^{\mathsf T}$ is the 
\emph{concurrence matrix} of the design: its $(u,v)$-entry is the number
of blocks containing both $u$ and $v$. Thus one has

$$
AA^{\mathsf T}=(r-\lambda)I+\lambda J,
$$

where $J$ is the all-one matrix.

For a function $f:\mathcal P\to\mathbb C$, fix an ordering
$\mathcal P=\{u_1,\ldots,u_n\}$ and let
$\mathbf f=(f(u_1),\ldots,f(u_n))^{\mathsf T}\in\mathbb C^n$. Denote its conjugate transpose by
$\mathbf f^*=(\overline{f(u_1)},\ldots,\overline{f(u_n)})$.
The $B$-coordinate of $A^{\mathsf T}\mathbf f$ is
$\sum_{u\in B}f(u)$. Hence

\begin{equation}\label{eq:design-second-moment}
	\begin{aligned}
		\sum_{B\in\mathcal B}
		\left|\sum_{u\in B}f(u)\right|^2
		&=\lVert A^{\mathsf T}\mathbf f\rVert^2\\
		&=\mathbf f^*AA^{\mathsf T}\mathbf f\\
		&=(r-\lambda)\sum_{u\in\mathcal P}|f(u)|^2
		+\lambda\left|\sum_{u\in\mathcal P}f(u)\right|^2.
	\end{aligned}
\end{equation}

Indeed, if $\mathbf 1$ denotes the all-one column vector, then
$J=\mathbf 1\mathbf 1^*$ and
$\mathbf f^*J\mathbf f
=\lvert\mathbf 1^*\mathbf f\rvert^2
=\left|\sum_{u\in\mathcal P}f(u)\right|^2$.

We apply this identity to the incidence structure
$(G,\cB_k^x)$. For $\chi\in\widehat G$, define

$$
E_\chi=
\sum_{B\in\cB_k^x}
\left|\sum_{b\in B}\chi(b)\right|^2.
$$

\begin{proposition}\label{prop:design-energy-constant}
	If $(G,\cB_k^x)$ is a $2$-$(n,k,\lambda)$ design, then $E_\chi$
	is independent of the choice of nontrivial character $\chi\ne1$.
	More precisely, we have
	
	$$
	E_\chi=(r-\lambda)n.
	$$
\end{proposition}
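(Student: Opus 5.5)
The plan is to apply the second-moment identity \eqref{eq:design-second-moment} directly to the design $(G,\cB_k^x)$, with $\mathcal P=G$ and the test function $f=\chi$ for a fixed nontrivial character $\chi\in\widehat G$. Since $(G,\cB_k^x)$ is assumed to be a $2$-$(n,k,\lambda)$ design, its concurrence matrix is $AA^{\mathsf T}=(r-\lambda)I+\lambda J$, where $r$ is the replication number. A $2$-design is in particular a $1$-design, so $r$ is a single constant independent of the point; this is what makes the identity available.

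With $f=\chi$, the left-hand side of \eqref{eq:design-second-moment} is exactly
$\sum_{B\in\cB_k^x}\bigl|\sum_{b\in B}\chi(b)\bigr|^2=E_\chi$. On the right-hand side, the first term uses $|\chi(g)|=1$ for every $g\in G$, since character values are roots of unity. Hence $\sum_{g\in G}|\chi(g)|^2=n$. The second term vanishes, because $\chi\ne1$ gives $\sum_{g\in G}\chi(g)=0$ by the orthogonality relation \eqref{eq:character-orthogonality}. Therefore $E_\chi=(r-\lambda)n$.

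The right-hand side depends only on the design parameters $n,r,\lambda$ and not on $\chi$. This gives both the formula and the independence from the choice of nontrivial character.

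There is no real obstacle here. The only point to check is that the identity \eqref{eq:design-second-moment} holds for complex-valued $f$, and this is already built into its derivation through the conjugate transpose $\mathbf f^*$.
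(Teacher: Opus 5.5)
Your proof is correct and matches the paper's argument: you apply the concurrence-matrix second-moment identity with $f=\chi$, use $|\chi(g)|=1$ to get $\sum_{g\in G}|\chi(g)|^2=n$, and use orthogonality $\sum_{g\in G}\chi(g)=0$ for $\chi\ne1$ to eliminate the $\lambda$-term. This gives $E_\chi=(r-\lambda)n$.
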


\begin{proof}
	Since every character value has modulus one,
	$\sum_{g\in G}|\chi(g)|^2=n$. If $\chi\ne1$, character
	orthogonality gives $\sum_{g\in G}\chi(g)=0$. Applying
	Eq.~\eqref{eq:design-second-moment} with $f=\chi$ gives
	
	$$
	E_\chi=(r-\lambda)n,
	$$

\end{proof}

The following theorem converts the design condition into a structural constraint on the character group. By Proposition~\ref{prop:design-energy-constant}, a $2$-design forces $E_\chi$ to be constant as $\chi$ ranges over the nontrivial characters. The expansion below identifies the $\chi$-dependent part of $E_\chi$ with a sum over the cyclic subgroups $C\le\widehat G$ containing $\langle\chi\rangle$. For each divisor $d$ of $\exp(G)$, set
\begin{equation}\label{eq:Ad-definition}
A_d=
\begin{cases}
n(-1)^{k+k/d-3}
\displaystyle\binom{n/d-2}{k/d-1},
& d\mid k,\\[2mm]
0, & d\nmid k.
\end{cases}
\end{equation}

\begin{theorem}\label{thm:energy-expansion}
	There exists a constant \(\beta\), independent of \(\chi\), such that for every nontrivial \(\chi\in\widehat G\),
	\begin{equation}
	E_\chi
	=
	\beta+
	\sum_{\substack{C\le\widehat G\ \mathrm{cyclic}\\
		\langle\chi\rangle\le C}}
	A_{|C|}R_C(x),
	\end{equation}
	where $R_C(x)$ is the Ramanujan sum defined above. Consequently, if $(G,\cB_k^x)$ is a $2$-design, then
	\begin{equation}\label{eq:energy-system}
	\sum_{\substack{C\le\widehat G\ \mathrm{cyclic}\\C_0\le C}}A_{|C|}R_C(x)
	\end{equation}
	is independent of the choice of nontrivial cyclic subgroup $C_0\le\widehat G$.
\end{theorem}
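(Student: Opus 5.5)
The plan is to expand $E_\chi$ directly and then run the same Fourier bookkeeping as in the proof of Theorem~\ref{thm:one-design-criterion}, with two free points instead of one. For $\chi\ne1$ we have
$$E_\chi=\sum_{B\in\cB_k^x}\sum_{u,v\in B}\chi(u-v)=k\,|\cB_k^x|+\sum_{u\ne v}\lambda(u,v)\chi(u)\overline{\chi(v)},$$
where $\lambda(u,v)$ is the number of blocks containing both $u$ and $v$. The first term is a constant and goes into $\beta$.

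\textbf{Step 1: Fourier form of $\lambda(u,v)$.} For $u\ne v$ we have $\lambda(u,v)=N(G\setminus\{u,v\},k-2,x-u-v)$. By Eq.~\eqref{eq:subset-sum-character-formula} and Lemma~\ref{lem:product-identity}, with $d=\ord(\psi)$,
$$\lambda(u,v)=\frac1n\sum_{\psi\in\widehat G}\psi(-x)\psi(u)\psi(v)\operatorname{Coef}_{k-2}\left\{\frac{(1-(-Z)^{d})^{n/d}}{(1+\psi(u)Z)(1+\psi(v)Z)}\right\}.$$

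\textbf{Step 2: sum over $u$ and $v$.} Expand $1/(1+\psi(u)Z)=\sum_{i\ge0}(-\psi(u)Z)^i$ and similarly in $v$ with index $j$. First drop the restriction $u\ne v$, at the cost of subtracting the diagonal $u=v$. On the diagonal, $\chi(u)\overline{\chi(u)}=1$, so the inner character sum is $\sum_u\psi^{i+j+2}(u)$, which does not involve $\chi$. Hence the diagonal correction is constant and also goes into $\beta$. In the unrestricted sum, orthogonality gives
$$\sum_u(\chi\psi^{i+1})(u)\sum_v(\overline\chi\psi^{j+1})(v),$$
which equals $n^2$ exactly when $\psi^{i+1}=\chi^{-1}$ and $\psi^{j+1}=\chi$, and vanishes otherwise. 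So only characters $\psi$ with $\chi\in\langle\psi\rangle$ contribute. Group these $\psi$ by the cyclic subgroup $C=\langle\psi\rangle\ge\langle\chi\rangle$, and let $d=|C|$.

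\textbf{Step 3: the coefficient for a fixed $C$.} For a fixed $\psi$, the admissible $i$ form a single residue class $a$ mod $d$ with $0\le a\le d-1$, and the admissible $j$ form a class $b$. Summing the geometric series gives
$$\sum_{i\equiv a}(-Z)^i=\frac{(-Z)^a}{1-(-Z)^d},$$
and likewise for $j$. The contribution of $\psi$ is therefore
$$n\,\psi(-x)\operatorname{Coef}_{k-2}\Bigl\{(-Z)^{a+b}\bigl(1-(-Z)^d\bigr)^{n/d-2}\Bigr\}.$$
Now $a+1$ and $b+1$ lie in $[1,d]$ and satisfy $a+b+2\equiv0\pmod d$. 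Since $\chi\ne1$, they cannot both equal $d$, so $a+b=d-2$ regardless of which generator $\psi$ of $C$ is used. The coefficient is then nonzero only when $d\mid k$. Extracting it gives $(-1)^{k-2}(-1)^{k/d-1}\binom{n/d-2}{k/d-1}$, and with the factor $n$ this is exactly $A_d$. Because this value is the same for every generator of $C$, summing $\psi(-x)$ over $\Gen(C)$ produces $A_{|C|}R_C(x)$. The trivial subgroup $C=1$ never contributes, since it would force $\chi=1$. This proves the displayed formula for $E_\chi$.

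\textbf{Step 4: the consequence.} If $(G,\cB_k^x)$ is a $2$-design, Proposition~\ref{prop:design-energy-constant} says $E_\chi$ is constant over $\chi\ne1$. The sum in Eq.~\eqref{eq:energy-system} depends only on $C_0=\langle\chi\rangle$, and every nontrivial cyclic subgroup arises this way, so that sum is independent of $C_0$.

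\textbf{Main obstacle.} The delicate step is Step 3. One has to pin down the residues $a,b$ uniformly across all generators of $C$, check that $a+b=d-2$ exactly (which is where $\chi\ne1$ is used), and keep the signs and the boundary case $n/d-2<0$ consistent with the definition of $A_d$. The diagonal correction in Step 2 also needs a careful check that it is genuinely independent of $\chi$.
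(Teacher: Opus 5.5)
Your proof is correct and is essentially the paper's argument. The paper expands the generating function $\sum_{B\subseteq G}|\sum_{b\in B}\chi(b)|^2\eta(\sum_{b\in B}b)Z^{|B|}$ using the forced-choice factors $r_g=\eta(g)Z/(1+\eta(g)Z)$, and its three pieces (the diagonal term $\sum_g r_g$, the completed square, and the correction $-\sum_g r_g^2$) are exactly your $k|\cB_k^x|$, your unrestricted double sum, and your diagonal subtraction; your residue $a+1$ is the paper's exponent $a$ with $\eta^a=\chi^{-1}$. One small point you leave implicit: for $k=2$ you apply Eq.~\eqref{eq:subset-sum-character-formula} with $k-2=0$. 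That is outside the stated range $1\le k\le n$, but the formula still holds there by the same orthogonality argument.
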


\begin{proof}
	By the definition of \(E_\chi\) and character orthogonality,
	\[
	\begin{aligned}
	E_\chi
	&=
	\frac1n
	\sum_{\eta\in\widehat G}
	\eta(-x)
	\operatorname{Coef}_{k}\left\{
	\sum_{B\subseteq G}
	\left|\sum_{b\in B}\chi(b)\right|^2
	\eta\left(\sum_{b\in B}b\right)
	Z^{|B|}\right\}.
	\end{aligned}
	\]
Fix \(\eta\in\widehat G\). We rewrite the inner subset sum using the ordered pair \((u,v)\) that appears after expanding the square. Namely,
$\left|\sum_{b\in B}\chi(b)\right|^2=\sum_{u,v\in B}\chi(u)\chi(-v)$.

Hence
\begin{equation}\label{eq:pair-counting-view}
	\begin{aligned}
		&\sum_{B\subseteq G}
		\left|\sum_{b\in B}\chi(b)\right|^2
		\eta\left(\sum_{b\in B}b\right)Z^{|B|} \\
		&\qquad =
		\sum_{u,v\in G}\chi(u)\chi(-v)
		\sum_{\substack{B\subseteq G\\ u,v\in B}}
		\eta\left(\sum_{b\in B}b\right)Z^{|B|}.
	\end{aligned}
\end{equation}

Put $S_\eta(Z)=\prod_{g\in G}(1+\eta(g)Z)$ and $r_g=\frac{\eta(g)Z}{1+\eta(g)Z}$. If \(u\ne v\), then forcing \(u\) and \(v\) to lie in \(B\) gives
\begin{equation}\label{eq:off-diagonal-forced-choice}
	\begin{aligned}
		\sum_{\substack{B\subseteq G\\ u,v\in B}}
		\eta\left(\sum_{b\in B}b\right)Z^{|B|}
		&=
		\eta(u)\eta(v)Z^2
		\prod_{g\in G\setminus\{u,v\}}(1+\eta(g)Z)  \\
		&=
		S_\eta(Z)r_ur_v.
	\end{aligned}
\end{equation}
If \(u=v\), then 
\begin{equation}\label{eq:diagonal-forced-choice}
	\begin{aligned}
		\sum_{\substack{B\subseteq G\\ u\in B}}
		\eta\left(\sum_{b\in B}b\right)Z^{|B|}
		&=
		\eta(u)Z
		\prod_{g\in G\setminus\{u\}}(1+\eta(g)Z)  \\
		&=
		S_\eta(Z)r_u.
	\end{aligned}
\end{equation}
Substituting Eqs.~\eqref{eq:off-diagonal-forced-choice} and~\eqref{eq:diagonal-forced-choice} into Eq.~\eqref{eq:pair-counting-view}, we obtain
\begin{equation}\label{eq:inner-sum-offdiag-diag}
	\begin{aligned}
		&\sum_{B\subseteq G}
		\left|\sum_{b\in B}\chi(b)\right|^2
		\eta\left(\sum_{b\in B}b\right)Z^{|B|} \\
		&\qquad =
		S_\eta(Z)
		\left(\sum_{\substack{u,v\in G\\ u\ne v}}
		\chi(u)\chi(-v)r_ur_v
		+
		\sum_{u\in G}r_u
		\right).
	\end{aligned}
\end{equation}
For the first summand in Eq.~\eqref{eq:inner-sum-offdiag-diag}, we have
\begin{equation}\label{eq:offdiag-completed}
	\begin{aligned}
		\sum_{\substack{u,v\in G\\ u\ne v}}
		\chi(u)\chi(-v)r_ur_v
		&=
		\sum_{u,v\in G}\chi(u)\chi(-v)r_ur_v
		-
		\sum_{g\in G}\chi(g)\chi(-g)r_g^2  \\
		&=
	\sum_{g\in G}\chi(g)r_g
	\sum_{g\in G}\chi(-g)r_g
		-
		\sum_{g\in G}r_g^2.
	\end{aligned}
\end{equation}
Combining Eqs.~\eqref{eq:inner-sum-offdiag-diag} and~\eqref{eq:offdiag-completed}, we get
\begin{equation}\label{eq:inner-sum-final}
	\begin{aligned}
		&\sum_{B\subseteq G}
		\left|\sum_{b\in B}\chi(b)\right|^2
		\eta\left(\sum_{b\in B}b\right)Z^{|B|} \\
		&\qquad =
		S_\eta(Z)
		\left[
		\left(\sum_{g\in G}\chi(g)r_g\right)
		\left(\sum_{g\in G}\chi(-g)r_g\right)
		+
		\sum_{g\in G}(r_g-r_g^2)
		\right].
	\end{aligned}
	\end{equation}

Observe that the second term inside the brackets is independent of \(\chi\). After summing over \(\eta\) and extracting the coefficient of \(Z^k\), all such contributions are absorbed into the constant \(\beta\).

It remains to compute
\[
S_\eta(Z)
\left(\sum_{g\in G}\chi(g)r_g\right)
\left(\sum_{g\in G}\chi(-g)r_g\right).
\]
Expanding \(r_g\) as a power series, we obtain $r_g=\sum_{j\ge1}(-1)^{j-1}\eta(g)^j Z^j$.
Then we have
\[
\sum_{g\in G}\chi(g)r_g
=
\sum_{j\ge1}(-1)^{j-1}Z^j
\sum_{g\in G}(\chi\eta^j)(g).
\]
Thus the inner sum is nonzero precisely when \(\eta^j=\chi^{-1}\).
Similarly,
\[
\sum_{g\in G}\chi(-g)r_g
=
\sum_{j\ge1}(-1)^{j-1}Z^j
\sum_{g\in G}(\chi^{-1}\eta^j)(g),
\]
and here the inner sum is nonzero precisely when \(\eta^j=\chi\).
Hence the product is zero unless \(\chi\in\langle\eta\rangle\).

Assume now that \(\chi\in\langle\eta\rangle\), and let
\(d=\ord(\eta)\). Choose \(1\le a\le d-1\) such that
\(\eta^a=\chi^{-1}\). Then
\[
\sum_{g\in G}\chi(g)r_g
=
n\sum_{t\ge0}(-1)^{a+td-1}Z^{a+td}
=
n\frac{(-1)^{a-1}Z^a}{1-(-1)^d Z^d},
\]
and
\[
\sum_{g\in G}\chi(-g)r_g
=
n\sum_{t\ge0}(-1)^{d-a+td-1}Z^{d-a+td}
=
n\frac{(-1)^{d-a-1}Z^{d-a}}{1-(-1)^d Z^d}.
\]
Their product is therefore
\begin{equation}\label{eq:dependent-energy-term}
\sum_{g\in G}\chi(g)r_g
\sum_{g\in G}\chi(-g)r_g=n^2(-1)^{d-2}\frac{Z^d}{(1-(-1)^d Z^d)^2}.
\end{equation}

By Lemma~\ref{lem:product-identity} and Eq.~\eqref{eq:dependent-energy-term}, we have
\[
S_\eta(Z)
\left(\sum_{g\in G}\chi(g)r_g\right)
\left(\sum_{g\in G}\chi(-g)r_g\right)=n^2(-1)^{d-2}Z^d
\bigl(1-(-1)^d Z^d\bigr)^{n/d-2}.
\]
Therefore the contribution of this \(\eta\) to the \(\chi\)-dependent
part, including the outer factor \(n^{-1}\eta(-x)\), is
\[
\begin{aligned}
	&\frac{1}{n}\eta(-x)\operatorname{Coef}_{k}\left\{S_\eta(Z)
	\left(\sum_{g\in G}\chi(g)r_g\right)
	\left(\sum_{g\in G}\chi(-g)r_g\right)\right\} \\
	&\qquad
	=\eta(-x)\,n(-1)^{d-2}
	\operatorname{Coef}_{k-d}\left\{
	\bigl(1-(-1)^dZ^d\bigr)^{n/d-2}\right\} \\
	&\qquad
	=\eta(-x)A_d.
\end{aligned}
\]
Consequently, the $\chi$-dependent part is obtained by summing $\eta(-x)A_{\ord(\eta)}$ over all $\eta$ with $\chi\in\langle\eta\rangle$. Grouping these characters by the cyclic subgroup $C=\langle\eta\rangle$ gives
\[
\sum_{\substack{C\le\widehat G\ \mathrm{cyclic}\\
		\langle\chi\rangle\le C}}
A_{|C|}R_C(x).
\]
This completes the proof.
\end{proof}

\begin{proposition}\label{prop:exp-divides-k}
	If \((G,\cB_k^x)\) is a \(2\)-design, then \(\exp(G)\mid k\).
\end{proposition}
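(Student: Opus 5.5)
The plan is to argue by contradiction, using the constancy statement of Theorem~\ref{thm:energy-expansion} together with the zeta-transform lemmas on the poset of cyclic subgroups. Put $e=\exp(G)$. Let $\mathcal P$ be the poset of nontrivial cyclic subgroups of $\widehat G$, ordered by inclusion. For $C\in\mathcal P$ set $w(C)=A_{|C|}R_C(x)$ and $F(C_0)=\sum_{C\ge C_0}w(C)$. By Theorem~\ref{thm:energy-expansion}, if $(G,\cB_k^x)$ is a $2$-design then $F(C_0)=\kappa$ for every $C_0\in\mathcal P$, for some constant $\kappa$. Suppose, for contradiction, that $e\nmid k$.

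\emph{Step 1: $\kappa=0$.} Since $\widehat G\cong G$ has exponent $e$, there is a cyclic subgroup $C^\ast\le\widehat G$ of order $e$. It is maximal in $\mathcal P$, because every cyclic subgroup has order dividing $e$. Hence $\kappa=F(C^\ast)=w(C^\ast)=A_eR_{C^\ast}(x)$. Since $e\nmid k$, the definition \eqref{eq:Ad-definition} gives $A_e=0$, and therefore $\kappa=0$.

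\emph{Step 2: $w\equiv0$.} Since $F\equiv0$ on $\mathcal P$, Lemma~\ref{lem:zeta-inversion} gives $A_{|C|}R_C(x)=0$ for every nontrivial cyclic subgroup $C$.

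\emph{Step 3: a subgroup where $w\ne0$.} A $2$-design is in particular a $1$-design, so Proposition~\ref{prop:coprime-k-exponent} gives $\gcd(k,e)>1$. Choose a prime $p$ dividing both $k$ and $e$, and let $C$ be a subgroup of $\widehat G$ of order $p$.
\begin{itemize}
\item By Remark~\ref{rem:generator-sum-properties}(i), $R_C(x)\in\{p-1,-1\}$, so $R_C(x)\neq0$.
\item Since $p\mid k$, we have $A_p=\pm n\binom{n/p-2}{k/p-1}$. Because $p$ divides both $k$ and $n$ and $k\le n-1$, we get $k\le n-p$, that is, $0\le k/p-1\le n/p-2$. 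Hence the binomial coefficient is positive and $A_p\ne0$.
\end{itemize}
Thus $w(C)\ne0$, which contradicts Step 2. Therefore $e\mid k$.

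The only delicate point is the range check on the binomial coefficient in $A_p$; it holds because $p$ divides both $n$ and $k$ with $k<n$. Every other step is a direct application of earlier results: Theorem~\ref{thm:energy-expansion}, Lemma~\ref{lem:zeta-inversion}, Proposition~\ref{prop:coprime-k-exponent} and Remark~\ref{rem:generator-sum-properties}.
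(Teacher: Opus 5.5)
Your proof is correct and follows essentially the paper's route: show that the constant $\kappa$ vanishes, invoke Lemma~\ref{lem:zeta-inversion} to get $A_{|C|}R_C(x)=0$ for every nontrivial cyclic $C\le\widehat G$, and contradict this at a subgroup of prime order $p\mid\gcd(k,e)$. The differences are minor: you get $\kappa=0$ from a maximal cyclic subgroup of order $e$ (where $A_e=0$), while the paper uses a cyclic subgroup of order $p^{h+1}$ with $h=\nu_p(k)<\nu_p(e)$; also, your check that $k\le n-p$ spells out why $A_p\ne0$, which the paper only asserts.
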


\begin{proof}
	Put \(e=\exp(G)\).  Since a \(2\)-design is a \(1\)-design,
	Proposition~\ref{prop:coprime-k-exponent} gives \(\gcd(k,e)>1\).  Suppose, for
	contradiction, that \(e\nmid k\).  Choose a prime \(p\mid e\) such that
	\(h=\nu_p(k)<\nu_p(e)\).
	
	Let \(D\le\widehat G\) be a cyclic subgroup of order \(p^{h+1}\).  If \(C\) is
	a cyclic subgroup of \(\widehat G\) containing \(D\), then
	\(p^{h+1}\mid |C|\), and hence \(|C|\nmid k\).  Thus \(A_{|C|}=0\) by
	Eq.~\eqref{eq:Ad-definition}, and so
	\[
	\sum_{\substack{C\le\widehat G\ \mathrm{cyclic}\\D\le C}}
	A_{|C|}R_C(x)=0.
	\]
	By Theorem~\ref{thm:energy-expansion}, the same expression is constant as
	\(D\) ranges over the nontrivial cyclic subgroups of \(\widehat G\).  Hence
	this constant is zero.  Applying Lemma~\ref{lem:zeta-inversion} to the finite
	poset of nontrivial cyclic subgroups of \(\widehat G\), ordered by inclusion,
	we obtain \(A_{|C|}R_C(x)=0\) for every nontrivial cyclic subgroup
	\(C\le\widehat G\).
	
	Choose a prime \(q\mid\gcd(k,e)\), and let \(C\le\widehat G\) be cyclic of order \(q\). Since \(2\le k\le n-1\), the coefficient \(A_q\) is nonzero by Eq.~\eqref{eq:Ad-definition}. On the other hand, \(R_C(x)\ne 0\) by Remark~\ref{rem:generator-sum-properties}~(i). This contradiction proves \(e\mid k\).
\end{proof}

\section{Proof of Theorem~\ref{thm:main}}\label{sec:local-obstruction}

In this section, we use the preceding results to derive structural constraints on $\widehat G$, and hence on $G$. For each prime \(p\mid |\widehat G|\), let \(\widehat G_p\) be the $p$-primary component of $\widehat G$, and put \(\widehat G^{(p)}=\prod_{q\ne p}\widehat G_q\). Thus $\widehat G=\widehat G_p\times\widehat G^{(p)}$. Under the dual primary decomposition of \(G\), write $G=G_p\oplus G^{(p)}$, where $G^{(p)}=\bigoplus_{q\ne p}G_q$, and write $x=x_p+x^{(p)}$. Put $d=\exp(G^{(p)})$.

By Proposition~\ref{prop:exp-divides-k} and Theorem~\ref{thm:energy-expansion}, a hypothetical $2$-design gives $\exp(G)\mid k$ and a complex number $\kappa$ such that
\begin{equation}\label{eq:global-equations-expanded}
F(D)=\sum_{\substack{C\le \widehat G\ \mathrm{cyclic}\\D\le C}}
A_{|C|}R_C(x)
=\kappa,
\end{equation}
for every nontrivial cyclic subgroup \(D\le\widehat G\). We first prove that, under the $2$-design hypothesis, every $p$-primary component of $G$ is elementary abelian.

\begin{theorem}\label{thm:prime-local-obstruction}
	Suppose that $\exp(G)\mid k$ and Eq.~\eqref{eq:global-equations-expanded} holds.  Then
	\(\exp(\widehat G_p)=p\) for every prime \(p\mid \exp(G)\).
\end{theorem}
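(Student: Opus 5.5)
The plan is to reduce Eq.~\eqref{eq:global-equations-expanded} to a system on cyclic $p$-subgroups alone and then obtain a contradiction from the top two layers of that system. Suppose, for contradiction, that $\exp(\widehat G_p)=p^a$ with $a\ge2$.

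\textbf{Step 1: factor the system.} Every cyclic $C\le\widehat G$ factors uniquely as $C=Q\times C'$, where $Q\le\widehat G_p$ and $C'\le\widehat G^{(p)}$ are cyclic. By Remark~\ref{rem:generator-sum-properties}~(iii),
\[
A_{|C|}R_C(x)=A_{|Q||C'|}R_Q(x)R_{C'}(x).
\]
For a nontrivial cyclic $p$-subgroup $P\le\widehat G_p$, the cyclic groups $C\ge P$ are exactly those with $Q\ge P$ and $C'$ arbitrary. Set
\[
\Gamma_j=\sum_{C'\le \widehat G^{(p)}\ \mathrm{cyclic}} A_{p^j|C'|}R_{C'}(x),
\]
which depends only on $j$. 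Taking $D=P$ in Eq.~\eqref{eq:global-equations-expanded} then gives
\[
\kappa=\sum_{Q\ge P}\Gamma_{\log_p|Q|}\,R_Q(x)
\]
for every nontrivial cyclic $P\le\widehat G_p$.

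\textbf{Step 2: $\kappa\neq0$.} If $\kappa=0$, Lemma~\ref{lem:zeta-inversion} on the poset of all nontrivial cyclic subgroups gives $A_{|C|}R_C(x)=0$ for every $C$. This fails for $|C|=p$: we have $p\mid\exp(G)\mid k$, so $A_p\neq0$, and $R_C(x)\neq0$ by Remark~\ref{rem:generator-sum-properties}~(i), exactly as in Proposition~\ref{prop:exp-divides-k}.

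\textbf{Step 3: the top layer.} A cyclic $Q$ of order $p^a$ is maximal among cyclic $p$-subgroups, so Step~1 gives $\kappa=\Gamma_aR_Q(x)$ for every such $Q$. Since $\kappa\ne0$, this forces $\Gamma_a\neq0$ and makes $R_Q(x)=\kappa/\Gamma_a$ a nonzero constant over all cyclic $Q$ of order $p^a$.

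\textbf{Step 4: the layer below.} Let $P$ be cyclic of order $p^{a-1}$. Every cyclic $p$-subgroup strictly above $P$ has order $p^a$. Let $N(P)$ be the number of cyclic $Q\le\widehat G_p$ of order $p^a$ containing $P$. Either by Lemma~\ref{lem:one-step-zeta-identity} or directly from Step~1 and Step~3, we get
\[
\Gamma_{a-1}R_P(x)=\bigl(1-N(P)\bigr)\kappa.
\]
This is where the contradiction should come from. I would compute $N(P)$ explicitly: it equals $|\widehat G_p[p]|/p$ or a similar count, depending on the $p$-rank and on how many cyclic factors of $\widehat G_p$ have order $p^a$. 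I would also compute $R_Q(x)$ and $R_P(x)$ explicitly. For a $p$-power cyclic group $Q$ with generator $\eta$, one has
\[
R_Q(x)=\begin{cases}\varphi(|Q|), & \eta(x)=1,\\ -|Q|/p, & \eta(x)\ \text{is a primitive $p$-th root of unity},\\ 0, & \text{otherwise.}\end{cases}
\]

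\textbf{Finishing, and the main obstacle.} The constancy and nonvanishing of $R_Q(x)$ over all $Q$ of order $p^a$ constrains $x_p$ strongly. The idea is to vary $Q$ over the order-$p^a$ cyclic groups containing a fixed $P$, together with those containing other order-$p^{a-1}$ subgroups, so that $R_P(x)$ is forced to take two different values, or $N(P)$ two different values, while the left-hand side $\Gamma_{a-1}R_P(x)$ must stay consistent. I would first try the case $x_p=0$, where all $R$'s equal $\varphi$-values, then the case $x_p\neq0$, where some order-$p^a$ subgroup pairs nontrivially with $x_p$. A further option is to descend to order $p$ using the ratio $\Gamma_{a-1}/\Gamma_a$ computed from the explicit binomials in $A_d$, which are nonzero since $\exp(G)\mid k$. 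I expect this final incompatibility argument, which needs the explicit values of $\Gamma_{a-1}$ versus $\Gamma_a$, or a purely combinatorial contradiction avoiding them, to be the main obstacle.
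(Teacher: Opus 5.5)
Your Steps 1--4 are correct. Step 1 is a legitimate variant of the paper's localization, and Steps 2--4 match $\kappa\ne0$, the top-order equation, and Lemma~\ref{lem:one-step-zeta-identity}. But the proposal stops where the theorem's content begins: no contradiction is derived, and the route you sketch for one would not suffice. Three ingredients are missing.

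First, you must extract the constraint on $x_p$ from Step 3. Summing the constant nonzero value $R_Q(x_p)$ over all cyclic $Q$ of order $p^a$ gives $\sum_{\ord(\eta)=p^a}\eta(-x_p)\ne0$. Orthogonality on $S=\widehat G_p$ then forces $x_p\in S[p^{a-1}]^\perp$, so $R_P(x_p)=\varphi(|P|)$ for every cyclic $P$ of order less than $p^a$.

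Second, your coefficients $\Gamma_j$ are uncontrolled signed sums over $\widehat G^{(p)}$, and you never show $\Gamma_{a-1}\ne0$. Yet when $S$ is cyclic, $N(P)=1$ and Step 4 only says $\Gamma_{a-1}R_P(x)=0$, so nonvanishing is exactly what is needed. The paper avoids this by applying Eq.~\eqref{eq:global-equations-expanded} to $D\times M$, with $M\le\widehat G^{(p)}$ cyclic of order $d=\exp(G^{(p)})$. The cyclic groups above $D\times M$ are exactly the $E\times M$, so the local weights are $A_{d|E|}R_E(x_p)$, with explicit nonzero binomial coefficients and $\kappa_p=\kappa/R_M(x^{(p)})$.

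Third, and most importantly, no purely combinatorial contradiction (``$R_P$ or $N(P)$ takes two values'') exists in general. Take $S\cong\mathbb Z_{p^a}^r$ and $x_p=0$. Every cyclic $P$ of order less than $p^a$ lies in $S^p$, so by Lemma~\ref{lem:root-count} it has exactly $\ell=p^{r-1}$ cyclic overgroups of order $p|P|$, and $R_P(x_p)=\varphi(|P|)$ uniformly. The system then reduces to $A_{dp^j}\varphi(p^j)=(1-\ell)\kappa_p$ for $j<a$ and $A_e\varphi(p^a)=\kappa_p$, which is combinatorially consistent. Only a size comparison of the coefficients can rule it out.

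The paper supplies this comparison at the bottom layer. For lines $L\le S[p]$ one gets $A_{dp}(p-1)=\kappa_p$ or $(1-\ell)\kappa_p$ according as $L\nleq S^p$ or $L\le S^p$; this forces $S[p]\subseteq S^p$ and $r\ge2$. Then $|\kappa_p|=|A_{dp}|(p-1)/(\ell-1)$, while the top layer gives $|\kappa_p|\le|A_e|\varphi(p^a)$. These clash with $\binom{p^{a-1}N-2}{p^{a-1}m-1}>p^{a-1}(\ell-1)\binom{N-2}{m-1}$, where $N=n/e\ge\ell^2$ and $m=k/e$ (Lemma~\ref{lem:binomial-estimate}).

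Your top-two-layer idea can also be completed, but only with these ingredients. Once $x_p\in S[p^{a-1}]^\perp$ and the weights are $A_{d|E|}R_E(x_p)$, there are three cases.
\begin{itemize}
\item If $S$ is not homogeneous, it has a cyclic $P$ of order $p^{a-1}$ with $N(P)=0$, e.g.\ $P=\langle\eta_1\eta_2^{p}\rangle$ with $\eta_1$ generating a cyclic factor of order less than $p^a$ and $\eta_2$ one of order $p^a$. It also has $P'$ with $N(P')=\ell$, and together they give $\ell\kappa_p=0$.
\item If $S$ is cyclic, you get $A_{dp^{a-1}}\varphi(p^{a-1})=0$, which is impossible.
\item If $S\cong\mathbb Z_{p^a}^r$ with $r\ge2$, you need $\binom{pN-2}{pm-1}>p(\ell-1)\binom{N-2}{m-1}$. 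This is the same binomial lemma with $p$ in place of $p^{a-1}$.
\end{itemize}
As written, the proposal contains none of this, so it is a correct setup rather than a proof.
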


We shall use the following localized form of Eq.~\eqref{eq:global-equations-expanded}. From now on, write $e=\exp(G)$.

\begin{proposition}\label{prop:sylow-localization}
	Suppose that Eq.~\eqref{eq:global-equations-expanded} holds. For a fixed prime $p\mid\exp(\widehat G)$, let \(M\le\widehat G^{(p)}\) be cyclic of order \(d\). Then \(R_M(x^{(p)})\ne 0\). Moreover, for every nontrivial cyclic subgroup \(D\le\widehat G_p\),
	\begin{equation}\label{eq:local-equations}
	\sum_{\substack{E\le \widehat G_p\ \mathrm{cyclic}\\D\le E}}
	A_{d|E|}R_E(x_p)
	=
	\kappa_p,
	\qquad
	\kappa_p=\frac{\kappa}{R_M(x^{(p)})}\ne 0.
	\end{equation}
\end{proposition}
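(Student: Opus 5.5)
The plan is to split every cyclic subgroup of $\widehat G$ into its $p$-part and its $p'$-part. With that splitting, fixing the $p'$-part to be a cyclic subgroup $M$ of maximal order $d$ collapses the global system Eq.~\eqref{eq:global-equations-expanded} to a sum over $p$-subgroups only. Concretely, every cyclic $C\le\widehat G$ factors uniquely as $C=E\times N$ with $E=C\cap\widehat G_p$ cyclic in $\widehat G_p$ and $N=C\cap\widehat G^{(p)}$ cyclic in $\widehat G^{(p)}$. Then $|C|=|E|\,|N|$, and by Remark~\ref{rem:generator-sum-properties}~(iii) we have $R_C(x)=R_E(x)R_N(x)$. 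Moreover, a character $\eta\in\widehat G_p$ has $p$-power order, so it is trivial on $G^{(p)}$ (a group of order prime to $p$); hence $\eta(-x)=\eta(-x_p)$. Symmetrically, characters in $\widehat G^{(p)}$ are trivial on $G_p$. Therefore
\[
R_C(x)=R_E(x_p)\,R_N(x^{(p)}).
\]

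The first step is to show that $\kappa\neq 0$, arguing exactly as at the end of the proof of Proposition~\ref{prop:exp-divides-k}. If $\kappa=0$, Lemma~\ref{lem:zeta-inversion} applied to the poset of nontrivial cyclic subgroups gives $A_{|C|}R_C(x)=0$ for every nontrivial cyclic $C$. Now take $C$ cyclic of prime order $q\mid e$. Since $e\mid k$ and $q\mid n$, we have $k/q-1\le n/q-2$, because $k\le n-1$ and $q\mid k,n$ force $k\le n-q$. Hence the binomial coefficient defining $A_q$ is nonzero, so $A_q\ne0$. Also $R_C(x)\ne0$ by Remark~\ref{rem:generator-sum-properties}~(i). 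This is a contradiction, so $\kappa\ne0$.

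Next, fix a nontrivial cyclic $D\le\widehat G_p$ and a cyclic $M\le\widehat G^{(p)}$ of order $d=\exp(\widehat G^{(p)})$; such an $M$ exists because $\exp(\widehat G^{(p)})=\exp(G^{(p)})$. Apply Eq.~\eqref{eq:global-equations-expanded} to the nontrivial cyclic subgroup $D\times M$.

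1. A cyclic $C=E\times N$ contains $D\times M$ iff $E\supseteq D$ and $N\supseteq M$.
2. Since $M$ already has the maximal possible order $d$ among cyclic subgroups of $\widehat G^{(p)}$, the condition $N\supseteq M$ forces $N=M$.
3. Summing over $C$ therefore becomes summing over $E$, which gives
\[
\kappa=F(D\times M)=R_M(x^{(p)})\sum_{\substack{E\le\widehat G_p\ \mathrm{cyclic}\\ D\le E}}A_{d|E|}R_E(x_p).
\]
4. Since $\kappa\ne0$, the factor $R_M(x^{(p)})$ must be nonzero.
5. Dividing by $R_M(x^{(p)})$ yields Eq.~\eqref{eq:local-equations} with $\kappa_p=\kappa/R_M(x^{(p)})\ne0$.

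This holds for every nontrivial cyclic $D\le\widehat G_p$, which is what the proposition asks. The degenerate case $d=1$, where $G$ is a $p$-group, is consistent with this: there $M$ is trivial, $R_M=1$, and the identity is just Eq.~\eqref{eq:global-equations-expanded} with $\kappa_p=\kappa$. The derivation of $R_M(x^{(p)})\ne0$ needs at least one nontrivial $D$, i.e.\ $p\mid|\widehat G|$, which is assumed.

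The main point needing care is the factorization step: checking that cyclic subgroups of $\widehat G$ containing $D\times M$ correspond bijectively to pairs $(E,M)$ with $E\supseteq D$. This rests on the uniqueness of the primary decomposition $C=(C\cap\widehat G_p)\times(C\cap\widehat G^{(p)})$ together with the maximality of $|M|$. The remaining verifications are routine.
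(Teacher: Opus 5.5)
Your proof is correct and follows essentially the same route as the paper. Both arguments get $\kappa\neq 0$ from Lemma~\ref{lem:zeta-inversion} together with a prime-order cyclic subgroup, identify the cyclic subgroups containing $DM$ as exactly the $EM$ with $D\le E\le\widehat G_p$, and factor $R_{EM}(x)=R_E(x_p)R_M(x^{(p)})$ via Remark~\ref{rem:generator-sum-properties}~(iii). The one small difference is that you read off $R_M(x^{(p)})\neq 0$ directly from the factorized identity at $D\times M$, whereas the paper first applies Eq.~\eqref{eq:global-equations-expanded} to a maximal cyclic subgroup $NM$ of order $e$ to obtain $\kappa=A_eR_N(x_p)R_M(x^{(p)})$.
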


\begin{proof}
	Choose a cyclic subgroup \(N\le\widehat G_p\) of order \(\exp(\widehat G_p)\). Then \(NM\) is cyclic of order \(e\), hence maximal among the cyclic subgroups of \(\widehat G\). Applying Eq.~\eqref{eq:global-equations-expanded} to \(NM\) gives $\kappa=A_eR_{NM}(x)$. By Remark~\ref{rem:generator-sum-properties}~(iii), \(R_{NM}(x)=R_N(x_p)R_M(x^{(p)})\). We claim that $\kappa\ne0$. Indeed, if $\kappa=0$, Lemma~\ref{lem:zeta-inversion} applied to Eq.~\eqref{eq:global-equations-expanded} would give
	$$
	A_{|C|}R_C(x)=0
	$$
	for every nontrivial cyclic subgroup $C\le\widehat G$. Taking $C$ of prime order $q\mid e$, we have $q\mid e\mid k$, and hence $A_q\ne0$. On the other hand, Remark~\ref{rem:generator-sum-properties}~(i) gives $R_C(x)\ne0$, a contradiction. Thus $\kappa\ne0$.
	
	By Remark~\ref{rem:generator-sum-properties}~(iii),
	$$
	R_{NM}(x)=R_N(x_p)R_M(x^{(p)}).
	$$
	Since $\kappa=A_eR_{NM}(x)\ne0$, it follows that $R_M(x^{(p)})\ne0$.  
	
	Fix a nontrivial cyclic subgroup \(D\le \widehat G_p\).  The cyclic
	subgroups of \(\widehat G\) containing \(DM\) are precisely the subgroups
	\(EM\), where \(E\le \widehat G_p\) is cyclic and \(D\le E\). Using Eq.~\eqref{eq:global-equations-expanded} for \(DM\) and again
	Remark~\ref{rem:generator-sum-properties}~(iii), we obtain
	\[
	\begin{aligned}
		\kappa
		&=
		\sum_{\substack{E\le \widehat G_p\ \mathrm{cyclic}\\D\le E}}
		A_{d|E|}R_{EM}(x)                                      \\
		&=
		R_M(x^{(p)})
		\sum_{\substack{E\le \widehat G_p\ \mathrm{cyclic}\\D\le E}}
		A_{d|E|}R_E(x_p).
	\end{aligned}
	\]
	Dividing by \(R_M(x^{(p)})\) proves Eq.~\eqref{eq:local-equations}.
\end{proof}

From now on, put \(S=\widehat G_p\), and let $S^p=\{\chi^p:\chi\in S\}$. By Section~\ref{sec:preliminaries}, the group \(S[p]\) is an \(\mathbb F_p\)-vector space whose lines are precisely the cyclic subgroups of \(S\) of order \(p\). Write \(|S[p]|=p^r\), and put \(\ell=|S[p]|/p=p^{r-1}\).

\begin{lemma}\label{lem:root-count}
	With the notation above, let \(D\le S\) be a nontrivial cyclic subgroup, and let \(\mathcal R(D)\) be the set of cyclic subgroups \(E\le S\) such that \(D<E\) and \(|E|=p|D|\). Then
	\[
	|\mathcal R(D)|
	=
	\begin{cases}
		0, & \text{if }D\nleq S^p,\\
		\ell, & \text{if }D\le S^p.
	\end{cases}
	\]
\end{lemma}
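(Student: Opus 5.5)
The plan is to count the subgroups $E\in\mathcal R(D)$ through their generators, using the fact that inside a cyclic $p$-group the subgroup of a given order is unique. Write $|D|=p^a$ with $a\ge1$.

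First I characterize membership in $\mathcal R(D)$. If $E$ is cyclic of order $p^{a+1}$, then $E^p=\{\varepsilon^p:\varepsilon\in E\}$ is the unique subgroup of $E$ of order $p^a$. Hence $D<E$ with $|E|=p|D|$ holds if and only if $E^p=D$. Equivalently, for a generator $\varepsilon$ of $E$, the condition is $\langle\varepsilon^p\rangle=D$, that is, $\varepsilon^p\in\Gen(D)$.

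Conversely, let $\varepsilon\in S$ with $\varepsilon^p\in\Gen(D)$. Then $\ord(\varepsilon)=p\cdot p^a$, because $\ord(\varepsilon^p)=p^a\ge p$ and so $\ord(\varepsilon)$ cannot be coprime to $p$. Thus $E=\langle\varepsilon\rangle$ lies in $\mathcal R(D)$. This gives a surjection
\[
\{\varepsilon\in S:\varepsilon^p\in\Gen(D)\}\longrightarrow\mathcal R(D),\qquad \varepsilon\mapsto\langle\varepsilon\rangle,
\]
whose fibres are exactly the sets $\Gen(E)$, each of size $\varphi(p^{a+1})$.

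Next I count the domain of this map.
\begin{itemize}
\item If $D\nleq S^p$, then some generator $\delta$ of $D$ is not a $p$-th power. Then no element of $\Gen(D)$ is a $p$-th power, since they are all powers of one another prime to $p$ and $S^p$ is a subgroup. So the domain is empty and $|\mathcal R(D)|=0$.
\item If $D\le S^p$, every $\delta\in\Gen(D)$ lies in $S^p$. Its $p$-th roots form a coset $\varepsilon_0S[p]$, since the kernel of $\chi\mapsto\chi^p$ on $S$ is $S[p]$. This coset has size $p^r$.
\end{itemize}

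In the second case the domain therefore has size $\varphi(p^a)\,p^r$. Dividing by the fibre size gives
\[
|\mathcal R(D)|=\frac{\varphi(p^a)\,p^r}{\varphi(p^{a+1})}=\frac{p^r}{p}=\ell,
\]
where I used $a\ge1$, so that $\varphi(p^{a+1})=p\,\varphi(p^a)$.

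The only delicate points are the use of $a\ge1$, both to get the order of $\varepsilon$ and for the totient ratio, and the observation that "$D\le S^p$" is equivalent to "some, hence every, generator of $D$ is a $p$-th power". Both are routine.
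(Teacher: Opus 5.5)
Your proof is correct and follows the same route as the paper. Both count the elements $\varepsilon\in S$ with $\langle\varepsilon^p\rangle=D$: there are none if $D\nleq S^p$, and otherwise $\varphi(p^a)\,|S[p]|$ of them, since the kernel of $\chi\mapsto\chi^p$ is $S[p]$. Both then divide by the $\varphi(p^{a+1})=p\,\varphi(p^a)$ generators of each $E\in\mathcal R(D)$. Your version only makes explicit what the paper leaves implicit: that every such $\varepsilon$ generates a member of $\mathcal R(D)$, and that the fibres of $\varepsilon\mapsto\langle\varepsilon\rangle$ are exactly the sets $\Gen(E)$.
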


\begin{proof}
	Let \(|D|=p^s\).  A subgroup in \(\mathcal R(D)\) is generated by an element
	\(y\in S\) with \(\langle y^p\rangle=D\).  If \(D\nleq S^p\), no such \(y\)
	exists.  Assume \(D\le S^p\).  Since the kernel of \(y\mapsto y^p\) is \(S[p]\),
	the number of \(y\in S\) with \(\langle y^p\rangle=D\) is
	\(\varphi(p^s)|S[p]|\).  Each subgroup in \(\mathcal R(D)\) has
	\(\varphi(p^{s+1})=p\varphi(p^s)\) such generators.  Hence
	\[
	|\mathcal R(D)|
	=
	\frac{\varphi(p^s)|S[p]|}{\varphi(p^{s+1})}
	=
	\frac{|S[p]|}{p}
	=
	\ell.
	\]
\end{proof}

Proposition~\ref{prop:sylow-localization} will be used in the following form.

\begin{lemma}\label{lem:local-weight-identity}
	For every nontrivial cyclic subgroup \(D\le S\),
	\begin{equation}\label{eq:local-weight-identity}
		A_{d|D|}R_D(x_p)
		=
		\begin{cases}
			\kappa_p, & \text{if }D\nleq S^p,\\[1mm]
			(1-\ell)\kappa_p, & \text{if }D\le S^p.
		\end{cases}
	\end{equation}
\end{lemma}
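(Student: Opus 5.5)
We apply Lemma~\ref{lem:one-step-zeta-identity} to the finite poset $\mathcal P$ of nontrivial cyclic subgroups of $S$, ordered by inclusion. The weight is $w(E)=A_{d|E|}R_E(x_p)$, and by Proposition~\ref{prop:sylow-localization} its upper zeta transform is
\[
F(D)=\sum_{\substack{E\le S\ \mathrm{cyclic}\\ D\le E}}w(E)=\kappa_p
\]
for every $D\in\mathcal P$. So the hypothesis of Lemma~\ref{lem:one-step-zeta-identity} holds with $\kappa=\kappa_p$. For a fixed $D$, we take $\mathcal R(D)$ to be the set from Lemma~\ref{lem:root-count}, namely the cyclic $E$ with $D<E$ and $|E|=p|D|$.

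The key step is to check that every cyclic $B>D$ lies above exactly one element of $\mathcal R(D)$. Since $S$ is a $p$-group, $|B|=p^t|D|$ with $t\ge1$. The cyclic group $B$ has a unique subgroup of each order dividing $|B|$. Hence $B$ contains exactly one subgroup $E$ of order $p|D|$, and that $E$ is cyclic. Its unique subgroup of order $|D|$ equals the unique such subgroup of $B$, which is $D$; therefore $D<E$ and $E\in\mathcal R(D)$.

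Uniqueness is immediate. Any $E'\in\mathcal R(D)$ with $E'\le B$ is a subgroup of $B$ of order $p|D|$, so $E'=E$.

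Lemma~\ref{lem:one-step-zeta-identity} then gives $w(D)=(1-|\mathcal R(D)|)\kappa_p$. Substituting the count from Lemma~\ref{lem:root-count} completes the argument:
\begin{itemize}
\item if $D\nleq S^p$, then $|\mathcal R(D)|=0$ and $w(D)=\kappa_p$;
\item if $D\le S^p$, then $|\mathcal R(D)|=\ell$ and $w(D)=(1-\ell)\kappa_p$.
\end{itemize}

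The only point needing care is the unique-cover property. It relies on the standard fact that a cyclic group has exactly one subgroup of each order dividing its order. I expect no real obstacle beyond that.
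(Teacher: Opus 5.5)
Your proposal is correct and follows the paper's proof essentially verbatim. You apply the one-step zeta identity on the poset of nontrivial cyclic subgroups of $S$ with $w(E)=A_{d|E|}R_E(x_p)$, take $\mathcal R(D)$ to be the cyclic overgroups of order $p|D|$, check the unique-cover property via uniqueness of subgroups of a given order in a cyclic group, and read off $|\mathcal R(D)|$ from the root-count lemma, with your unique-cover check spelled out slightly more explicitly than in the paper.
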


\begin{proof}
	Put \(w(E)=A_{d|E|}R_E(x_p)\) for nontrivial cyclic \(E\le S\).  By
	Eq.~\eqref{eq:local-equations}, \(\sum_{E\ge D}w(E)=\kappa_p\) for every
	nontrivial cyclic \(D\le S\).  For fixed \(D\), set
	\[
	\mathcal R(D)=\{E\le S:\ E\text{ is cyclic},\ D<E,\ |E|=p|D|\}.
	\]
	If \(C>D\) is cyclic, then \(C\) has a unique subgroup \(E\) of order
	\(p|D|\), and this \(E\) contains \(D\).  Thus \(\mathcal R(D)\) satisfies the
	hypothesis of Lemma~\ref{lem:one-step-zeta-identity}, and so
	\(w(D)=(1-|\mathcal R(D)|)\kappa_p\).  The formula follows from
	Lemma~\ref{lem:root-count}.
\end{proof}

We prove Theorem~\ref{thm:prime-local-obstruction} by contradiction. Suppose that \(\exp(S)=p^\alpha\), where \(\alpha\ge2\), and let \(\mathcal C\) be the set of cyclic subgroups of $S$ of order $p^\alpha$. Since every \(C\in\mathcal C\) is maximal among the cyclic subgroups of $S$, Eq.~\eqref{eq:local-equations} gives
\begin{equation}\label{eq:top-order-value}
A_eR_C(x_p)=\kappa_p.
\end{equation}

\begin{proposition}\label{prop:top-order-annihilation}
	Suppose that Eq.~\eqref{eq:global-equations-expanded} holds and \(\exp(S)=p^\alpha\), where \(\alpha\ge2\). Let
	\[
	S[p^{\alpha-1}]^\perp=\{y\in G_p:\chi(y)=1\text{ for every }\chi\in S[p^{\alpha-1}]\}.
	\]
	Then \(x_p\in S[p^{\alpha-1}]^\perp\). In particular, \(\eta(x_p)=1\) for every \(\eta\in S[p]\). Consequently, for every subgroup \(L\le S[p]\) of order \(p\),
	\begin{equation}\label{eq:line-generator-sum-new}
	R_L(x_p)=p-1.
	\end{equation}
\end{proposition}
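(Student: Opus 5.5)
The plan is to exploit Eq.~\eqref{eq:top-order-value}: every cyclic subgroup of $S$ of maximal order $p^\alpha$ carries the same nonzero value of $R_C(x_p)$. Averaging this over a suitable union of such subgroups produces a character sum that factors through $S[p^{\alpha-1}]$.

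First I record that $A_e\ne 0$ and $R_C(x_p)$ is independent of $C\in\mathcal C$. Indeed, Proposition~\ref{prop:sylow-localization} gives $\kappa_p\ne0$, and every $C\in\mathcal C$ is maximal among the cyclic subgroups of $S$. So Eq.~\eqref{eq:top-order-value} gives $R_C(x_p)=\kappa_p/A_e=:\rho$, a nonzero constant for all $C\in\mathcal C$.

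Next, fix $\psi\in S$ of order $p^\alpha$ and consider the set
\[
T_\psi=\bigcup_{a\in(\mathbb Z/p\mathbb Z)^\times}\psi^a S[p^{\alpha-1}].
\]
Every element of $T_\psi$ has order $p^\alpha$, since $(\psi^a\tau)^{p^{\alpha-1}}=\psi^{ap^{\alpha-1}}\ne1$. Moreover, $T_\psi$ is exactly the set of $\eta$ of order $p^\alpha$ with $\eta^{p^{\alpha-1}}\in\langle\psi^{p^{\alpha-1}}\rangle\setminus\{1\}$, and this condition is preserved by $\eta\mapsto\eta^b$ for $p\nmid b$. Hence $T_\psi$ is a disjoint union of the generator sets $\Gen(C)$ over some nonempty family $\mathcal C_\psi\subseteq\mathcal C$ (nonempty because it contains $\langle\psi\rangle$). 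Computing $\Sigma:=\sum_{\eta\in T_\psi}\eta(-x_p)$ in two ways gives
\[
|\mathcal C_\psi|\,\rho=\Sigma=\Bigl(\sum_{a\in(\mathbb Z/p\mathbb Z)^\times}\psi^a(-x_p)\Bigr)\sum_{\tau\in S[p^{\alpha-1}]}\tau(-x_p).
\]
By Eq.~\eqref{eq:subgroup-orthogonality} applied to $H=S[p^{\alpha-1}]$ (viewed inside $\widehat G$, with $x_p\in G_p$), the last factor vanishes unless $x_p\in S[p^{\alpha-1}]^\perp$. Since the left-hand side is nonzero, this forces $x_p\in S[p^{\alpha-1}]^\perp$.

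Finally, $\alpha\ge2$ gives $S[p]\le S[p^{\alpha-1}]$, so $\eta(x_p)=1$ for every $\eta\in S[p]$. Thus every line $L\le S[p]$ satisfies $x_p\in L^\perp$, and Remark~\ref{rem:generator-sum-properties}~(i) yields $R_L(x_p)=p-1$.

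The main obstacle is the middle step. The naive choice of a single coset $\psi S[p^{\alpha-1}]$ is not a union of generator sets, so I would check carefully that the union over $a$ is closed under prime-to-$p$ powers and consists only of elements of order $p^\alpha$. That closure is exactly what lets Eq.~\eqref{eq:top-order-value} be applied termwise.
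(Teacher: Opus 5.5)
Your proof is correct and is essentially the paper's argument. Both use that $R_C(x_p)=\kappa_p/A_e$ is a nonzero constant on $\mathcal C$ to get a nonzero sum of $\eta(-x_p)$ over a union of sets $\Gen(C)$, and then reduce to $\sum_{\tau\in S[p^{\alpha-1}]}\tau(-x_p)$ by orthogonality. The paper sums over all $\eta$ of order $p^\alpha$ and uses $\sum_{\eta\in S}\eta(-x_p)=0$ for $x_p\ne0$, whereas you factor over the cosets making up $T_\psi$, which avoids the case split. Note, however, that $S\setminus S[p^{\alpha-1}]$ is already a union of cosets of $S[p^{\alpha-1}]$ and of sets $\Gen(C)$. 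So restricting to $T_\psi$, and the closure check you flagged as the main obstacle, is not actually needed.
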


\begin{proof}
	By Eq.~\eqref{eq:top-order-value}, the values \(R_C(x_p)\), \(C\in\mathcal C\),
	are equal and nonzero.  Hence
	\[
	0\ne \sum_{C\in\mathcal C}R_C(x_p)
	=
	\sum_{\substack{\eta\in S\\ \ord(\eta)=p^\alpha}}\eta(-x_p),
	\]

	If \(x_p=0\), the assertion is immediate.  Assume \(x_p\ne 0\).  Character
	orthogonality gives \(\sum_{\eta\in S}\eta(-x_p)=0\).  Since the elements of
	order strictly smaller than \(p^\alpha\) are exactly those in
	\(S[p^{\alpha-1}]\), we get
	\[
	\sum_{\substack{\eta\in S\\ \ord(\eta)=p^\alpha}}\eta(-x_p)
	=
	-\sum_{\eta\in S[p^{\alpha-1}]}\eta(-x_p).
	\]
	The left-hand side is nonzero, so the last sum is nonzero.  By orthogonality
	again, \(x_p\in S[p^{\alpha-1}]^\perp\).
	
	Since \(\alpha\ge2\), we have \(S[p]\le S[p^{\alpha-1}]\).  Thus every
	\(\eta\in S[p]\) satisfies \(\eta(x_p)=1\).  If \(L\le S[p]\) has order
	\(p\), then all generators of \(L\) evaluate trivially at \(x_p\), and
	Eq.~\eqref{eq:line-generator-sum-new} follows.
\end{proof}

\begin{proposition}\label{prop:torsion-contained-in-pS}
	Assume that Eq.~\eqref{eq:global-equations-expanded} holds and \(\exp(S)=p^\alpha\), where \(\alpha\ge2\). Then \(S[p]\subseteq S^p\) and \(\dim_{\mathbb F_p}S[p]\ge2\).
\end{proposition}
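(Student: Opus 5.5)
The plan is to apply Lemma~\ref{lem:local-weight-identity} to the cyclic subgroups of $S$ of order $p$, that is, to the lines $L$ of the $\mathbb F_p$-space $S[p]$, and to feed in the value $R_L(x_p)=p-1$ from Proposition~\ref{prop:top-order-annihilation}. For every line $L\le S[p]$, Eq.~\eqref{eq:local-weight-identity} then reads
\[
(p-1)A_{dp}
=
\begin{cases}
\kappa_p, & L\nleq S^p,\\
(1-\ell)\kappa_p, & L\le S^p.
\end{cases}
\]
The left-hand side does not depend on $L$. This forces a dichotomy: unless $\ell=0$, which is impossible since $\ell=p^{r-1}\ge1$, and given $\kappa_p\ne0$ from Proposition~\ref{prop:sylow-localization}, either all lines lie in $S^p$ or none does.

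Next I will exhibit one line inside $S^p$. Since $\exp(S)=p^\alpha$ with $\alpha\ge2$, pick $\chi\in S$ of order $p^\alpha$. Then $\chi^{p^{\alpha-1}}=(\chi^{p^{\alpha-2}})^p$ lies in $S^p$ and has order $p$, so $\langle\chi^{p^{\alpha-1}}\rangle$ is a line in $S^p$. By the dichotomy, every line of $S[p]$ lies in $S^p$. Because $S[p]$ is the union of its lines and $S^p$ is a subgroup, this gives $S[p]\subseteq S^p$.

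For the dimension bound, the second case now applies to every line, so $(p-1)A_{dp}=(1-\ell)\kappa_p$. Suppose $\dim_{\mathbb F_p}S[p]=1$. Then $\ell=1$, so $A_{dp}=0$, and I will contradict this by showing $A_{dp}\ne0$.
\begin{itemize}
\item Here $d=\exp(G^{(p)})$ is coprime to $p$, and both $d$ and $p$ divide $e$, so $dp\mid e\mid k$.
\item By Eq.~\eqref{eq:Ad-definition}, $A_{dp}=\pm n\binom{n/(dp)-2}{k/(dp)-1}$.
\item Since $dp\mid n$, $dp\mid k$ and $k\le n-1$, we get $k\le n-dp$, hence $0\le k/(dp)-1\le n/(dp)-2$, and the binomial coefficient is positive.
\end{itemize}
This contradiction gives $\dim_{\mathbb F_p}S[p]\ge2$.

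The only delicate points are the nonvanishing facts used above: $\kappa_p\ne0$, which is already supplied by Proposition~\ref{prop:sylow-localization}, and $A_{dp}\ne0$, which is the index check just given. I do not expect a serious obstacle beyond these.
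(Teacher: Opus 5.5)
Your proposal is correct and follows essentially the same argument as the paper. You apply Lemma~\ref{lem:local-weight-identity} to the lines of $S[p]$ with $R_L(x_p)=p-1$, use $\kappa_p\ne0$ to rule out lines lying both inside and outside $S^p$, and derive $A_{dp}=0$ when $\ell=1$, contradicting $dp\mid e\mid k$. The paper phrases the middle step via $U=S[p]\cap S^p\ne0$, which is exactly what your element $\chi^{p^{\alpha-1}}$ exhibits, and your check that the binomial coefficient in $A_{dp}$ is positive fills in a detail the paper leaves implicit.
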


\begin{proof}
	For every line \(L\le S[p]\), Proposition~\ref{prop:top-order-annihilation}
	and Eq.~\eqref{eq:local-weight-identity} give
	\begin{equation}\label{eq:line-local-dichotomy}
		A_{dp}(p-1)
		=
		\begin{cases}
			\kappa_p,
			&\text{if }L\nleq S^p,\\[1mm]
			(1-\ell)\kappa_p,
			&\text{if }L\le S^p.
		\end{cases}
	\end{equation}
	Let \(U=S[p]\cap S^p\).  Since \(\alpha\ge2\), we have \(U\ne 0\).  If
	\(U\ne S[p]\), then there are lines \(L_0\le U\) and \(L_1\nleq U\).
	Applying Eq.~\eqref{eq:line-local-dichotomy} to \(L_0\) and \(L_1\) gives
	\((1-\ell)\kappa_p=\kappa_p\), a contradiction.  Hence \(S[p]\subseteq S^p\).
	
	If \(\dim_{\mathbb F_p}S[p]=1\), then \(\ell=1\), and the unique line in
	\(S[p]\) lies in \(S^p\).  Eq.~\eqref{eq:line-local-dichotomy} gives
	\(A_{dp}(p-1)=0\), contrary to Eq.~\eqref{eq:Ad-definition}, since
	\(dp\mid e\mid k\).
\end{proof}

We will use the following elementary estimate.

\begin{lemma}\label{lem:binomial-estimate}
	Let \(a\ge2\), \(\ell\ge2\), \(N\ge \ell^2\), and
	\(1\le m\le N-1\).  Then
	\[
	\binom{aN-2}{am-1}
	>
	a(\ell-1)\binom{N-2}{m-1}.
	\]
\end{lemma}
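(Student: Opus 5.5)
The plan is to compare the two binomial coefficients through a Vandermonde-type decomposition, isolating a factor of $\binom{N-2}{m-1}$ on the left, and then to bound the remaining factor below by a power of $N$, which the hypothesis $N\ge\ell^2$ makes larger than $a(\ell-1)$.

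First, split a ground set of $aN-2$ elements into a part of size $N-2$ and a part of size $(a-1)N$. Split the target size $am-1$ correspondingly as $(m-1)+(a-1)m$. Keeping only the single term of Vandermonde's identity that corresponds to this split, we get
\[
\binom{aN-2}{am-1}\ \ge\ \binom{N-2}{m-1}\binom{(a-1)N}{(a-1)m}.
\]
The factor $\binom{N-2}{m-1}$ is positive because $1\le m\le N-1$ gives $0\le m-1\le N-2$. It therefore suffices to prove
\[
\binom{(a-1)N}{(a-1)m}>a(\ell-1).
\]

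Second, iterating Vandermonde in the same way (split $(a-1)N$ into $a-1$ blocks of size $N$) gives
\[
\binom{(a-1)N}{(a-1)m}\ \ge\ \binom{N}{m}^{a-1}.
\]
Since $1\le m\le N-1$, we have $\binom{N}{m}\ge N\ge\ell^2$. Hence the left side is at least $\ell^{2(a-1)}$.

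Third, we check that $\ell^{2(a-1)}>a(\ell-1)$, splitting into two cases.
\begin{itemize}
\item If $a=2$, this reads $\ell^2>2\ell-2$, which holds because $\ell^2-2\ell+2=(\ell-1)^2+1>0$.
\item If $a\ge3$, it suffices to show $\ell^{2a-2}>a\ell$, i.e.\ $\ell^{2a-3}>a$. This follows from $\ell^{2a-3}\ge 2^{2a-3}>a$ for $a\ge3$, which is an easy induction (at $a=3$ it reads $8>3$).
\end{itemize}
Combining the three steps gives the strict inequality claimed in Lemma~\ref{lem:binomial-estimate}.

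The only real choice in this argument is the first step: splitting $am-1$ so that exactly $\binom{N-2}{m-1}$ appears as a factor. Once that is done, the rest consists of routine elementary estimates, and I do not expect any obstacle beyond keeping the small case $a=2$ separate.
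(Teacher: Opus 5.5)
Your proof is correct and follows the paper's route: keeping the single Vandermonde term $j=m-1$ isolates the factor $\binom{N-2}{m-1}$, exactly as the paper does. The only difference is the final estimate. The paper uses the cruder bound $\binom{(a-1)N}{(a-1)m}\ge (a-1)N\ge (a-1)\ell^2>a(\ell-1)$, which makes your iterated bound $\binom{N}{m}^{a-1}$ and the case split on $a$ unnecessary.
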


\begin{proof}
	Vandermonde's identity gives
	\[
	\binom{aN-2}{am-1}
	=
	\sum_j
	\binom{N-2}{j}
	\binom{(a-1)N}{am-1-j}.
	\]
	All terms in the sum are nonnegative, and the term \(j=m-1\) is
	\[
	\binom{N-2}{m-1}
	\binom{(a-1)N}{(a-1)m}.
	\]
	Hence
	\[
	\binom{aN-2}{am-1}
	\ge
	\binom{N-2}{m-1}
	\binom{(a-1)N}{(a-1)m}.
	\]
	Since \(1\le (a-1)m\le (a-1)N-1\), we have
	\(\binom{(a-1)N}{(a-1)m}\ge (a-1)N\).  Finally,
	\[
	(a-1)N\ge (a-1)\ell^2>a(\ell-1),
	\]
	for \(a,\ell\ge2\).  The result follows.
\end{proof}

\begin{proof}[Proof of Theorem~\ref{thm:prime-local-obstruction}]
	It remains to exclude \(\exp(S)=p^\alpha\) with \(\alpha\ge2\). By Proposition~\ref{prop:torsion-contained-in-pS}, \(S[p]\subseteq S^p\) and \(\dim_{\mathbb F_p}S[p]\ge2\). Hence
	\[
	S\cong
	\mathbb Z_{p^{\alpha_1}}\oplus\cdots\oplus\mathbb Z_{p^{\alpha_r}},
	\qquad
	2\le\alpha_1\le\cdots\le\alpha_r=\alpha,
	\]
	with \(r\ge2\). Thus \(\ell=|S[p]|/p=p^{r-1}\). Put
	\(a=p^{\alpha-1}\), \(N=n/e\), and \(m=k/e\). Since
	\(e=dp^\alpha\),
	\begin{equation}\label{eq:N-lower-new}
		N
		=
		\frac{|S|}{p^\alpha}\cdot\frac{|\widehat G^{(p)}|}{d}
		\ge
		p^{2(r-1)}
		=
		\ell^2.
	\end{equation}
	
	Choose a line \(L\le S[p]\).  Since \(S[p]\subseteq S^p\), Eqs.~\eqref{eq:line-generator-sum-new}
	and~\eqref{eq:local-weight-identity} give
	\[
	A_{dp}(p-1)=(1-\ell)\kappa_p.
	\]
	As \(\ell\ge2\), we have
	\begin{equation}\label{eq:kappa-lower-new}
		|\kappa_p|
		=
		\frac{|A_{dp}|(p-1)}{\ell-1}.
	\end{equation}
	
	On the other hand, for any cyclic subgroup \(C\le S\) of order \(p^\alpha\),
	Eq.~\eqref{eq:top-order-value} and Remark~\ref{rem:generator-sum-properties}~(ii)
	give
	\begin{equation}\label{eq:kappa-upper-new}
		|\kappa_p|
		\le
		|A_e|\varphi(p^\alpha)
		=
		|A_e|a(p-1).
	\end{equation}
	
	By Eq.~\eqref{eq:Ad-definition},
	\begin{equation}\label{eq:two-coefficients-new}
		|A_{dp}|
		=
		n\binom{aN-2}{am-1},
		\qquad
		|A_e|
		=
		n\binom{N-2}{m-1}.
	\end{equation}
	Lemma~\ref{lem:binomial-estimate}, together with Eq.~\eqref{eq:N-lower-new},
	yields
	\[
	\binom{aN-2}{am-1}
	>
	a(\ell-1)\binom{N-2}{m-1}.
	\]
	Thus Eq.~\eqref{eq:two-coefficients-new} gives
	\(|A_{dp}|>a(\ell-1)|A_e|\).  This is incompatible with
	Eqs.~\eqref{eq:kappa-lower-new} and~\eqref{eq:kappa-upper-new}.  Therefore
	\(\exp(S)=p\).  Since \(p\mid e\) was arbitrary, the theorem follows.
\end{proof}

We next show that only one primary component can occur under the $2$-design hypothesis. Suppose, to the contrary, that at least two components occur. Write
\[
\widehat G=\widehat G_{p_1}\times\cdots\times\widehat G_{p_s},
\]
where \(s\ge2\), the \(p_i\) are distinct, and \(\widehat G_{p_i}\cong\mathbb Z_{p_i}^{\,a_i}\). Then \(e=\exp(G)=p_1\cdots p_s\). Under the dual decomposition of \(G\), write \(G=G_{p_1}\oplus\cdots\oplus G_{p_s}\) and \(x=x_{p_1}+\cdots+x_{p_s}\), with \(x_{p_i}\in G_{p_i}\).

For each \(i\), let \(\cL_i\) be the set of subgroups of
\(\widehat G_{p_i}\) of order \(p_i\).  For \(L\in\cL_i\),
Remark~\ref{rem:generator-sum-properties}~(i) gives
\begin{equation}\label{eq:order-p-ramanujan}
	R_L(x_{p_i})=
	\begin{cases}
		p_i-1,
		&\text{if }\eta(x_{p_i})=1\text{ for every }\eta\in L,\\
		-1,
		&\text{otherwise}.
	\end{cases}
\end{equation}
In particular, \(R_L(x_{p_i})\ne 0\).

\begin{lemma}\label{lem:mixed-maximal-products}
	With the notation above, assume that \(e\mid k\) and
	Eq.~\eqref{eq:global-equations-expanded} holds.  Then, for each \(i\),
	the value \(R_L(x_{p_i})\) is independent of \(L\in\cL_i\).
\end{lemma}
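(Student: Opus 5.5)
The plan is to evaluate Eq.~\eqref{eq:global-equations-expanded} at the maximal cyclic subgroups of \(\widehat G\). Since every \(\widehat G_{p_j}\) is elementary abelian, a maximal cyclic subgroup has order \(e=p_1\cdots p_s\). Each such subgroup factors uniquely as a product \(L_1L_2\cdots L_s\) with \(L_j\in\cL_j\), and every choice of lines \((L_1,\dots,L_s)\) occurs. Because such a subgroup is maximal, the sum in Eq.~\eqref{eq:global-equations-expanded} for \(D=L_1\cdots L_s\) has the single term \(C=D\). Applying Remark~\ref{rem:generator-sum-properties}~(iii) repeatedly, we expect
\[
\kappa=A_e\prod_{j=1}^{s}R_{L_j}(x_{p_j})
\]
for every tuple \((L_1,\dots,L_s)\in\cL_1\times\cdots\times\cL_s\).

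Next we check that the factors are nonzero. We have \(A_e\ne0\) because \(e\mid k\) and \(2\le k\le n-1\); more precisely, \(\binom{n/e-2}{k/e-1}\) is positive since \(1\le k/e\le n/e-1\). Eq.~\eqref{eq:order-p-ramanujan} shows that each \(R_{L_j}(x_{p_j})\ne0\). Hence \(\kappa\ne0\), and the product \(\prod_j R_{L_j}(x_{p_j})\) takes the same nonzero value \(\kappa/A_e\) for all choices of lines.

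Now fix \(i\) and two lines \(L,L'\in\cL_i\). Choose arbitrary lines \(L_j\in\cL_j\) for \(j\ne i\); these sets are nonempty since each \(\widehat G_{p_j}\) is nontrivial. Compare the tuple containing \(L\) in position \(i\) with the tuple containing \(L'\) in position \(i\), keeping the other entries fixed. Cancelling the common nonzero factor \(\prod_{j\ne i}R_{L_j}(x_{p_j})\) gives \(R_L(x_{p_i})=R_{L'}(x_{p_i})\).

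The only step needing care is the claim that the cyclic subgroups of order \(e\) are exactly the products of one line from each component, and that they are maximal, so that the defining sum collapses to a single term. This follows from the primary decomposition: a cyclic subgroup of \(\widehat G\) is the direct product of its \(p_j\)-parts, and each \(p_j\)-part has order at most \(p_j\) because \(\widehat G_{p_j}\) is elementary abelian. No cyclic subgroup can properly contain one of order \(e=\exp(\widehat G)\).
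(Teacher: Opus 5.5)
Your proposal is correct and follows essentially the same route as the paper: you evaluate Eq.~\eqref{eq:global-equations-expanded} at the maximal cyclic subgroups $L_1\cdots L_s$ to get $\kappa=A_e\prod_{j}R_{L_j}(x_{p_j})$, then use $A_e\neq 0$ and the nonvanishing from Eq.~\eqref{eq:order-p-ramanujan} to vary one line at a time. Your extra justifications, namely the positivity of $\binom{n/e-2}{k/e-1}$ and the maximality of the order-$e$ cyclic subgroups, fill in details that the paper states only briefly.
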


\begin{proof}
	Choose \(L_i\in\cL_i\).  The subgroup
	\(C=L_1\times\cdots\times L_s\) is cyclic of order \(e\), and hence is
	maximal among cyclic subgroups of \(\widehat G\).  Thus
	Eq.~\eqref{eq:global-equations-expanded} and
	Remark~\ref{rem:generator-sum-properties}~(iii) give
	\begin{equation}\label{eq:max-product-constant}
		A_e\prod_{i=1}^s R_{L_i}(x_{p_i})=\kappa.
	\end{equation}
	Since \(e\mid k\), Eq.~\eqref{eq:Ad-definition} gives \(A_e\ne 0\).
	Fixing all \(L_i\) except \(L_j\), Eq.~\eqref{eq:max-product-constant}
	and Eq.~\eqref{eq:order-p-ramanujan} show that \(R_L(x_{p_j})\) is
	constant on \(\cL_j\).  
\end{proof}

We can now prove the main theorem.

\begin{proof}[Proof of Theorem~\ref{thm:main}]
	Assume that \((G,\cB_k^x)\) is a \(2\)-design.  By
	Proposition~\ref{prop:exp-divides-k}, \(e=\exp(G)\) divides \(k\), and
	Theorem~\ref{thm:energy-expansion} gives
	Eq.~\eqref{eq:global-equations-expanded}.  By
	Theorem~\ref{thm:prime-local-obstruction}, every Sylow component of
	\(\widehat G\) is elementary abelian.  If only one prime divides \(|G|\),
	the conclusion follows.  It remains to exclude the case considered above.
	
	Choose \(j\) with \(p_j\) odd, and fix \(L_i\in\cL_i\) for \(i\ne j\).
	Put \(D=\prod_{i\ne j}L_i\).  The cyclic subgroups of \(\widehat G\)
	containing \(D\) are precisely \(D\) and \(D\times L\), where
	\(L\in\cL_j\).  By Lemma~\ref{lem:mixed-maximal-products}, write
	\(r=R_L(x_{p_j})\), independent of \(L\in\cL_j\).  Applying
	Eq.~\eqref{eq:global-equations-expanded} to \(D\), and comparing with
	Eq.~\eqref{eq:max-product-constant}, gives
	\begin{equation}\label{eq:one-prime-comparison}
		A_{e/p_j}+A_e(|\cL_j|-1)r=0.
	\end{equation}
	
	If \(a_j=1\), then \(|\cL_j|=1\), and
	Eq.~\eqref{eq:one-prime-comparison} gives \(A_{e/p_j}=0\), impossible
	since \(e/p_j\mid e\mid k\).  Hence \(a_j\ge2\).
	
	If \(x_{p_j}\ne 0\), then
	\(\{\eta\in\widehat G_{p_j}:\eta(x_{p_j})=1\}\) is a proper subgroup of
	index \(p_j\).  Since \(a_j\ge2\), it contains some, but not all,
	subgroups of order \(p_j\).  Eq.~\eqref{eq:order-p-ramanujan} would then
	give two different values for \(R_L(x_{p_j})\), contradicting the
	definition of \(r\).  Thus \(x_{p_j}=0\), and \(r=p_j-1\).
	
	Since \(|\cL_j|=(p_j^{a_j}-1)/(p_j-1)\),
	Eq.~\eqref{eq:one-prime-comparison} becomes
	\begin{equation}\label{eq:final-one-prime-contradiction}
		A_{e/p_j}+A_e(p_j^{a_j}-p_j)=0.
	\end{equation}
	Both coefficients are nonzero.  Writing \(k=me\), the difference of their
	sign exponents is
	\[
	\left(k+\frac{k}{e/p_j}-3\right)
	-
	\left(k+\frac{k}{e}-3\right)
	=
	m(p_j-1),
	\]
	which is even.  Hence \(A_{e/p_j}\) and \(A_e\) have the same sign.
	Since \(p_j^{a_j}-p_j>0\), Eq.~\eqref{eq:final-one-prime-contradiction}
	is impossible.
	
	Thus only one prime divides \(|G|\).  Since the corresponding Sylow
	component of \(\widehat G\) has prime exponent, \(\widehat G\), and hence
	\(G\), is an elementary abelian \(p\)-group.
\end{proof}

\end{document}